\documentclass[11pt,a4paper]{article}

\usepackage{theorem,enumerate}
\usepackage{amsmath,latexsym,amssymb,amsfonts}
\usepackage{eucal}
\usepackage{mathrsfs}
\usepackage{array}
\usepackage{footmisc}
\usepackage{graphicx, color}
\usepackage{caption,subcaption}


\newtheorem{theorem}{Theorem} \rm
\newtheorem{lemma}[theorem]{Lemma}

\newtheorem{claim}[theorem]{Claim}

\newtheorem{definition}[theorem]{Definition}

\numberwithin{theorem}{section}
\usepackage{color}



\newcommand{\qed}{\hfill \ensuremath{\Box}}

\begin{document}
\title{\bf The Alon-Tarsi number of planar graphs without  cycles of lengths $4$ and $l$}

\author{
	Huajing Lu\thanks{Department of Mathematics, Zhejiang Normal University, China.
		e-mail: {\tt huajinglu@zjnu.edu.cn
		}} \and
	Xuding Zhu\thanks{Department of Mathematics, Zhejiang Normal University, China.
		e-mail: {\tt  xdzhu@zjnu.edu.cn
		}} \thanks{Grant Numbers: NSFC 11571319 and 111 project of Ministry of Education of China.}
	}

\maketitle

\begin{abstract}
This paper proves that if $G$ is a planar graph without 4-cycles and $l$-cycles for some $l\in\{5, 6, 7\}$, then there exists a matching $M$ such that $AT(G-M)\leq 3$. This implies that every planar graph without 4-cycles and $l$-cycles for some $l\in\{5, 6, 7\}$ is 1-defective 3-paintable.

\end{abstract}
\par\bigskip\noindent
 {\em Keywords:} planar graph; choice number; paint number; Alon-Tarsi number.

\section{Introduction}
Assume $G$ is a graph and $d$ is a non-negative integer.
A {\em $d$-defective coloring} of $G$ is a coloring of the vertices of $G$ such that each color class induces a subgraph of maximum degree at most $d$. A 0-defective coloring of $G$ is also called  a {\em proper coloring} of $G$. In a coloring of the vertices of $G$, we say an edge $e$ is a {\em fault edge}  if the end vertices of $e$ receive the same color. A coloring of $G$ is 1-defective if and only if the set of fault edges is a matching. A {\em k-list assignment} of a graph $G$ is a mapping $L$ which assigns to each vertex $v$   a set $L(v)$ of $k$ permissible colors. Given a $k$-list assignment $L$ of $G$, a {\em $d$-defective $L$-coloring} of $G$ is a $d$-defective coloring $c$ of $G$ with $c(v)\in L(v)$ for every vertex $v$ of $G$. A graph $G$ is {\em $d$-defective $k$-choosable} if for any $k$-list assignment $L$ of $G$, there exists a $d$-defective $L$-coloring of $G$. We say $G$ is  {\em $k$-choosable}  if $G$ is $0$-defective $k$-choosable. The {\em choice number} $ch(G)$ of $G$ is the minimum $k$ for which $G$ is $k$-choosable.

Defective list coloring of planar graphs has been studied in a few papers.
Eaton and Hall \cite{EH}, and \v{S}krekovski \cite{Skr} proved independently that every planar graph is $2$-defective $3$-choosable. Cushing and Kierstead \cite{CK} proved that every planar graph is $1$-defective $4$-choosable. The above results can be reformulated as follows:

Assume $G$ is a planar graph. (1) For every $3$-list assignment $L$ of $G$, there is a subgraph $H$ of $G$ with $\Delta(H)\le 2$ and $G-E(H)$ is $L$-colorable.  (2) For every $4$-list assignment $L$ of $G$, there is a subgraph $H$ of $G$ with $\Delta(H)\le 1$ and $G-E(H)$ is $L$-colorable.

In the proofs of \cite{CK}, \cite{EH} and \cite{Skr}, the subgraph $H$ depends on the list assignment $L$. A natural question is whether there is a subgraph $H$ that does not depend on $L$. 
In other words, we ask the following questions:

{\em  
	 (1) Is it true that every planar graph $G$ has a subgraph $H$ with $\Delta(H)\le2$ such that $G-E(H)$ is $3$-choosable?}

{\em  
	(2) 
	 Is it true that every planar graph $G$ has a subgraph $H$ with $\Delta(H)\le1$, such that $G-E(H)$ is $4$-choosable?
}

It turns out that the answer to (1)  is negative and the answer to (2) is positive. Very recently, it is shown in \cite{KKZ} that  
there is a planar graph $G$ such that for any subgraph $H$ of $G$ with $\Delta(H)\le3$ (this number $3$ is not a typo), $G-E(H)$ is not $3$-choosable. On the other hand, as a consequence of the main  result 
 in \cite{GZ}, every planar graph $G$ has a matching $M$ such that $G-M$ is $4$-choosable.

The main result   in \cite{GZ} is about the Alon-Tarsi number of $G-M$.
We associate to each vertex $v$ of $G$ a variable $x_v$. The graph polynomial $P_G({\bf x})$ of $G$ is defined as $P_G({\bf x})=\prod_{u \sim v,u<v}(x_v-x_u)$, where ${\bf x}=\{x_v: v\in V(G)\}$ and $<$ is an arbitrary fixed ordering of the vertices of $G$. It is easy to see that a mapping $\phi : V\rightarrow R$ is a proper coloring of $G$ if and only if $P_G(\phi)\ne0$, where $P_G(\phi)$ means to evaluate the polynomial at $x_v=\phi(v)$ for $v\in V(G)$. Thus to find a proper coloring of $G$ is equivalent to find an assignment of ${\bf x}$ so that the polynomial evaluated at the assignment is non-zero.

For a mapping $\eta: V(G) \to \{0,1,\ldots\}$, let $c_{P_G, \eta}$ be the coefficient of the monomial $\prod_{v \in V(G)} x_v^{\eta(v)}$ in the expansion of $P_G$. It follows from the Combinatorial Nullstellensatz   that if   $c_{P, \eta}\ne 0$, and $L$ is a list assignment of $G$ for which   $|L(v)| = \eta(v)+1$, then $G$ is $L$-colorable. (Note that $P_G$ is a homogeneous polynomial, and all the monomials with nonzero coefficient are of highest degree.)   In particular, if $c_{P_G, \eta}\ne0$ and $\eta( v)<k$ for all $v\in V(G)$, then $G$ is $k$-choosable.
Jensen and Toft \cite{JT} defined the {\em Alon-Tarsi number} of $G$ as
\begin{center}
$AT(G)$=min\{$k:c_{P_G, \eta}\ne0$ for some $\eta$ with $\eta(v)<k$ for all $v\in V(G)$\}.
\end{center}
Thus for any graph $G$, $ch(G)\le AT(G)$. The following is the main result in \cite{GZ}.

\begin{theorem}
  \label{thm-GZ}
  Every planar graph $G$ has a matching $M$ such that $AT(G-M)\leq 4$.
\end{theorem}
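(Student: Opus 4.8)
The plan is to prove the theorem by a Thomassen-type induction on plane near-triangulations, run at the level of the Alon--Tarsi orientation certificate rather than of list colourings. Recall the Alon--Tarsi theorem: $AT(H)\le k$ whenever $H$ has an orientation $D$ with maximum out-degree at most $k-1$ for which the number $EE(D)$ of spanning Eulerian sub-digraphs with an even number of arcs differs from the number $OE(D)$ with an odd number of arcs; there is also the local refinement in which the out-degree bound at $v$ is replaced by a prescribed weight $w(v)$, and this is exactly what the coefficients $c_{P_G,\eta}$ above encode. So instead of $L$-colourings I will produce, for a suitable family of weighted near-triangulations, a matching $M$ together with an orientation of $G-M$ whose out-degrees obey the weights and for which $EE\ne OE$.

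First I would reduce to $G$ being a plane triangulation: embed $G$ and add edges until every face is a triangle, obtaining a simple plane triangulation $G'\supseteq G$; if $M'$ is a matching of $G'$ with $AT(G'-M')\le 4$, then $M:=M'\cap E(G)$ is a matching of $G$ and $G-M$ is a spanning subgraph of $G'-M'$, whence $AT(G-M)\le AT(G'-M')\le 4$ by monotonicity of $AT$ under subgraphs. Then I would state the inductive claim for near-triangulations with a distinguished outer cycle $C=v_1v_2\cdots v_p$: the vertex $v_1$ is forced to be a sink, its neighbour $v_2$ on $C$ carries weight $1$ (the orientation analogue of Thomassen's precoloured edge $v_1v_2$), all other boundary vertices carry weight $2$, interior vertices carry weight $3$, and $M$ is forbidden to meet $v_1$ or $v_2$. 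The claim, to be proved by induction on $|V(G)|$, is that such a $G$ has a matching $M$ and an orientation $D$ of $G-M$ with $d^+_D(v)\le w(v)$ for all $v$ and $EE(D)\ne OE(D)$; applied to a triangulation with its outer triangle in the distinguished role this yields $AT(G-M)\le 4$.

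The induction proceeds in the two familiar cases. If $C$ has a chord $v_iv_j$, cut $G$ along it into near-triangulations $G_1,G_2$ meeting in the edge $v_iv_j$, apply the hypothesis to each piece (with $v_iv_j$ taking the distinguished role in the piece not containing $v_1v_2$), combine the two matchings --- a matching, since each avoids its distinguished pair --- and combine the orientations, using the behaviour of the signed Eulerian count under amalgamation at the cut to keep it nonzero; the weight conventions at the distinguished pair are chosen precisely so that the out-degree bounds survive the combination. If $C$ is chordless, let $v_p$ be the neighbour of the sink $v_1$ other than $v_2$, with cyclic neighbour list $v_1,u_1,\dots,u_m,v_{p-1}$; delete $v_p$ to get a smaller near-triangulation in which each $u_i$ moves onto the boundary and loses one unit of weight, apply induction, then re-insert $v_p$ and orient its incident edges within its budget. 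At interior weight $3$ this re-insertion has essentially no room to spare: the directed cycles newly created through $v_p$ can force $EE=OE$, and the weight at some $u_i$ (or at $v_{p-1}$) can overflow; it is exactly here that one edge incident to $v_p$ is placed into $M$, which restores both the out-degree budget and --- the subtler point --- the parity. Since at most one edge is contributed per fan step and $v_1,v_2$ are never touched, the edges collected into $M$ over the whole recursion form a matching. The small configurations ($p=3$, degenerate fans with $m=0$) are handled directly.

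I expect the crux to be the parity bookkeeping in the fan step. In list colouring it suffices that a single admissible colour survives the deletion of $v_p$; here one must instead control how re-inserting $v_p$ and its out-arcs perturbs the signed count of Eulerian sub-digraphs and show that one matching-compatible edge deletion always restores $EE\ne OE$. The workable route is probably to carry a stronger invariant through the induction --- for instance that the orientation built so far is acyclic outside a controlled set of arcs, so that $EE(D)-OE(D)$ can be evaluated explicitly --- after which the real difficulty becomes showing that in every fan and chord configuration the edge put into $M$ can be chosen to simultaneously re-establish this invariant, keep all out-degrees within the weights, and keep $M$ globally a matching. Getting the three weight levels, the distinguished-pair conventions, and the matching-placement rule mutually consistent, with all small cases covered, is where I expect the bulk of the work to lie.
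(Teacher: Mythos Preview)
The paper does not prove Theorem~\ref{thm-GZ}; it is quoted as the main result of \cite{GZ} (Grytczuk and Zhu) and is used only as background motivation. There is therefore no proof in the present paper to compare your proposal against.

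That said, your outline is broadly the strategy of \cite{GZ}: a Thomassen-style induction on near-triangulations, carried out at the level of Alon--Tarsi orientations rather than list colourings, with a distinguished boundary edge playing the role of the precoloured edge and with one matching edge spent in each fan step. Where your sketch remains genuinely incomplete is exactly where you flag it: you do not specify the invariant that makes the signed Eulerian count controllable after re-inserting $v_p$, nor do you show that a single matching edge always suffices to repair both the out-degree budget and the parity. In \cite{GZ} this is handled not by maintaining acyclicity outside a controlled set, but by a more direct case analysis on the fan at $v_p$ together with an explicit choice of which edge enters $M$; the chord case also needs care to ensure the two matchings combine to a matching (your assertion that ``each avoids its distinguished pair'' is the right idea but must be checked against the actual weight conventions at the cut edge). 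As written, your proposal is a correct high-level plan with the decisive technical step still to be supplied.
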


This theorem actually implies the online version of $1$-defective $4$-choosability of planar graphs. The online version of $d$-defective $k$-choosable is called {\em $d$-defective $k$-paintable} and is defined through a two-person game. 

Given a graph $G$ and non-negative integers $k,d$, the 
{\em  $d$-defective $k$-painting game} on $G$ is played by two players: Lister and Painter. Initially, each vertex has $k$ tokens and is uncolored. In each round, Lister selects a nonempty set $M$ of uncolored vertices and takes away one token from each vertex in $M$. Painter colors a subset $X$ of $M$ such that the induced subgraph $G[X]$ has maximum degree at most $d$. If at the end of a certain round, there is an uncolored vertex with no tokens left, then Lister wins. Otherwise, at the end of some round, all vertices are colored, Painter wins. We say $G$ is {\em $d$-defective $k$-paintable} if Painter has a winning strategy in the $d$-defective $k$-painting game. The 0-defective $k$-painting game is also called the $k$-painting game, and we say $G$ is {\em $k$-paintable} if it is 0-defective $k$-paintable. The {\em paint number $\chi_P(G)$} of $G$ is the minimum $k$ such that $G$ is $k$-paintable.

It follows from the definition that $d$-defective $k$-paintable implies $d$-defective $k$-choosable. The converse is not true. Indeed, although every planar graph is $2$-defective $3$-choosable, it was shown in \cite{GHKZ} that  there are planar graphs that are not $2$-defective $3$-paintable.

On the other hand, it was proved by Schauz \cite{Schauz} that for any graph $G$, $\chi_P(G)\le AT(G)$. So for any graph $G$, 
$ch(G) \le \chi_P(G) \le AT(G)$.  Both  gaps $\chi_P(G)- ch(G)$ and  $AT(G)-  \chi_P(G)$   can   be arbitrarily large \cite{GK}. 
Thus Theorem \ref{thm-GZ} implies that every planar graph is $1$-defective $4$-paintable. We observe that ``having a matching $M$ so that $AT(G-M) \le 4$'' is much stronger than ``
being $1$-defective $4$-paintable''.  
One may compare this to the following results: It is shown in \cite{GHKZ} that every planar graph is $3$-defective $3$-paintable. However, as mentioned earlier,     there are planar graphs $G$ such that for any subgraph $H$ of $G$ with $\Delta(H) \le 3$, $G-E(H)$ is not $3$-choosable \cite{KKZ} (and hence $AT(G-E(H)) \ge 4$).

In this paper, we are interested in the Alon-Tarsi number of some subgraphs of planar graphs without cycles of lengths 4 and $l$ for some $l\in\{5, 6, 7\}$. We denote by $\mathcal{P}_{k, l}$ the family of planar graphs $G$ which contains no cycles of length $k$ or $l$. It was proved in \cite{Lih} that for $l\in\{5, 6, 7\}$, every graph $G\in\mathcal{P}_{4, l}$ is 1-defective 3-choosable. We strengthen this result and prove that for $l\in\{5, 6, 7\}$, every graph $G\in\mathcal{P}_{4, l}$ has a matching $M$ such that $G-M$ has Alon-Tarsi number at most $3$. 
As discussed above, this implies that for $l \in \{5,6,7\}$, every graph $G \in \mathcal{P}_{4,l}$ is $1$-defective $3$-paintable.

\par\bigskip
For a plane graph $G$, we denote its vertex set, edge set and face set by $V(G), E(G)$ and  $F(G)$, respectively.
For a vertex $v$, $d_G(v)$ (or $d(v)$ for short) is the degree of $v$. A vertex $v$ is called a $k$-vertex (respectively,   a $k^+$-vertex  or a $k^-$-vertex) if $d(v)=k$ (respectively, $d(v) \ge k$ or $d(v) \le k$). 
 For $e = uv\in E(G)$, we say $e$ is an $(a, b)$-edge if $d(u)=a$ and $d(v)=b$. For $f\in F(G)$, we denote $f=[u_1u_2\cdots u_n]$ if $u_1, u_2, \cdots, u_n$ are the boundary vertices of $f$ in cyclic order. A 3-face $[u_1u_2u_3]$ is a $(d_1, d_2, d_3)$-face if $d(u_i)=d_i$ for $i=1, 2, 3$.

\section{The main result}

The following is the main result of this paper.

\begin{theorem}
	\label{thm-main}
	For $l \in \{5,6,7\}$, every graph $G \in \mathcal{P}_{4,l}$ has a matching $M$ such that $AT(G-M) \le 3$.
\end{theorem}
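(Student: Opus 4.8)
The plan is to prove the theorem by a discharging argument combined with the standard "Alon–Tarsi orientation" machinery. Recall that by a theorem of Alon and Tarsi, $AT(G) \le k$ if $G$ has an orientation $D$ with maximum out-degree at most $k-1$ in which the number of even Eulerian subdigraphs differs from the number of odd ones; in particular, it suffices to find an orientation with max out-degree $\le 2$ that has no "bad" structure, and a convenient sufficient condition is that $G-M$ admits a decomposition witnessing $AT \le 3$, e.g. it is "$3$-degenerate after deleting a matching" in a suitable orientation sense, or more precisely that we can build the required orientation greedily. So the first step is to set up the reduction: it suffices to show that for every $G \in \mathcal{P}_{4,l}$ there is a matching $M$ and an orientation of $G-M$ with maximum out-degree at most $2$ satisfying the Alon–Tarsi coefficient condition; I would invoke the known sufficient conditions (e.g. if every subgraph has a vertex of out-degree $\le 2$ in a fixed acyclic-ish orientation, or the kernel-type/"$d_1,d_2,d_3$ structure" results used in the $AT \le 4$ planar paper) so that the combinatorial core reduces to a purely structural statement about $G$.

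Second, I would assume for contradiction that $G$ is a counterexample with the fewest vertices (or fewest vertices plus edges), and extract the usual reducible-configuration lemmas: $G$ is connected, has minimum degree $\ge 3$ (a $\le 2$-vertex can be removed, handled, and put back — deleting at most one incident edge into $M$), and contains none of a list of small reducible configurations (e.g. a $3$-vertex adjacent to two $3$-vertices, certain $(3,3,k)$-faces, adjacent triangles of specified degree patterns, light edges, etc.). Each reducibility claim is verified by the same template: delete the configuration, apply minimality to get a matching $M'$ and an Alon–Tarsi orientation of the smaller graph, then extend it back, orienting the few new edges so that out-degrees stay $\le 2$ and absorbing at most one new edge per affected vertex into the matching; the cycle-length restrictions ($4$-cycle-free and $l$-cycle-free) are what guarantee that small configurations near triangles are sparse enough for the extension to go through.

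Third, the heart is the discharging. Assign to each vertex $v$ the charge $d(v) - 4$ and to each face $f$ the charge $|f| - 4$; by Euler's formula the total is $-8 < 0$. Using that $G$ has no $4$-face and no $l$-face, every face has length $3$ or length $\ge 5$ with length $\ne l$, which sharply limits how $3$-faces cluster (no two $3$-faces share an edge when... well, adjacent triangles would create short cycles of the forbidden lengths in many cases), and this is exactly where $l \in \{5,6,7\}$ is used — each choice of $l$ forbids a different clustering pattern of triangles. I would then design discharging rules (typical: each $4^+$-vertex sends a fixed fraction like $\tfrac13$ or $\tfrac12$ to each incident $3$-face or to each adjacent $3$-vertex) and show that after discharging every vertex and face has nonnegative charge, contradicting the negative sum. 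The main obstacle I anticipate is precisely the case analysis in the discharging step for $3$-faces and $3$-vertices: one must balance the rules so they simultaneously work for all three values $l \in \{5,6,7\}$ (or run three parallel analyses), and the reducible-configuration list must be rich enough that every remaining local configuration is dischargeable — getting these two in sync (enough reducible configurations, provably reducible via the orientation-extension argument, yet a discharging scheme that certifies their absence suffices) is the delicate balancing act, and the $l=7$ case in particular tends to be the tightest because $7$-cycles are the "least restrictive" forbidden length and allow the most triangle clustering.
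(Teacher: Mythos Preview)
Your high-level plan is exactly the paper's: reduce to finding an Alon--Tarsi orientation of $G-M$ with maximum out-degree $\le 2$, take a minimum counterexample, establish reducible configurations, and discharge from the initial charge $d(x)-4$ on vertices and faces. A few points, however, are worth flagging because they are where the actual work lies and where your anticipations drift from what is needed.

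First, the paper does not induct on Theorem~\ref{thm-main} directly; it proves a technical strengthening in which one fixes a boundary vertex $v_0$ and demands both that the matching avoid $v_0$ and that $d_D^+(v_0)=0$ in the orientation. This root-vertex device is what makes the $2$-connectivity reduction go through: at a cut vertex $z^*$ you apply induction to each side with $z^*$ playing the role of $v_0$, so the two matchings do not collide at $z^*$ and the two orientations glue into an AT orientation (all edges across the cut point from one side to the other, so Eulerian subdigraphs factor). Your plan omits this, and without it the step ``$G$ is $2$-connected'' is not obviously provable.

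Second, the specific reducible configurations are more delicate than ``a $3$-vertex adjacent to two $3$-vertices''. The paper's core reductions are: no two adjacent $3$-vertices; and the notion of a \emph{minor triangle} (a $(3,4,4)$-face) together with \emph{triangle chains} of $(4,4,4)$-faces, with lemmas saying a chain emanating from a minor triangle cannot terminate near a $3$-vertex or another minor triangle. These chain lemmas are what feed the discharging, and their reducibility proofs use the matching in a patterned way (one matching edge per triangle in the chain).

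Third, the discharging flows mostly from \emph{faces} to $3$-faces and from \emph{faces} to $3$-vertices (with $5^+$-vertices supplementing certain $5$- or $6$-faces), not from $4^+$-vertices to $3$-faces as you suggest. And your guess about difficulty is inverted: the $l=6$ case is the easiest (a $5$-face cannot be adjacent to any triangle, so discharging is almost immediate), while $l=5$ is the most involved, requiring several auxiliary claims about $6$-faces surrounded by triangles.
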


For the proof of Theorem \ref{thm-main}, we use an alternate definition of Alon-Tarsi number.
A digraph $D$ is {\em Eulerian} if $d^+_D(v)=d^-_D(v)$ for every vertex $v$. Note that an Eulerian digraph needs not   be connected. In particular, a digraph with no arcs is an Eulerian digraph. Assume $G$ is a graph and $D$ is an orientation of $G$. Let ${\cal E}_e(D)$ (respectively, ${\cal E}_o(D)$) be the set of spanning Eulerian sub-digraphs of $D$ with an even (respectively, an odd) number of arcs.
Let 
$${\rm diff}(D)=  \left| {\cal E}_e(D)\right| -\left| {\cal E}_o(D)\right|.$$
An orientation $D$ of $G$ is {\em Alon-Tarsi (AT)} if ${\rm diff}(D) \ne 0$. Alon and Tarsi \cite{AT} proved that if $D$ is an orientation of $G$, and $\eta(x_v)=d^+_D(v)$, then $  c_{P_{G, \eta}} = \pm {\rm diff}(D)$. Hence the Alon-Tarsi number of $G$ can be defined alternatively as
\begin{center}
	$AT(G)$=min\{$k$: $G$ has an AT orientation $D$ with $\Delta^+_D(v)<k$\}.
\end{center}

The proof of Theorem \ref{thm-main} is by induction. For the purpose of using induction, instead of proving Theorem \ref{thm-main} directly, we shall prove a stronger and more technique result.

\begin{definition} \label{valid-matching} 
	Assume $G$ is a plane graph and $v_0$ is a vertex on the boundary of $G$. A {\em valid} matching of $(G, v_0)$ is a matching $M$ which does not cover $v_0$.
\end{definition}

\begin{definition} \label{good-orientation} 
	 Let $G$ be a plane graph and $v_0$ be a vertex on the boundary of $G$. An orientation $D$ of $G$ is {\em good}, if $D$ is AT with $\Delta_{D}^+(v)<3$ and $d_{D}^+(v_0)=0$.
\end{definition}

We shall prove the following result, which obviously implies Theorem \ref{thm-main}.

\begin{theorem}
	\label{main-thm}
	Assume $l\in\{5, 6, 7\}$, $G\in\mathcal{P}_{4, l}$, and $v_0$ is a vertex on the boundary of $G$. Then $(G, v_0)$ has a valid matching $M$ such that there is a good orientation $D$ of $G-M$.
\end{theorem}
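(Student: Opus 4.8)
The plan is to argue by induction on $|V(G)|$, using discharging to locate a "reducible" configuration and then either deleting a matching edge or contracting/removing a small structure near the boundary. The base case ($|V(G)|$ small, e.g. $G$ a single vertex or edge) is trivial since the empty orientation is good. For the inductive step I would first reduce to the case where $G$ is $2$-connected with a boundary cycle (if $G$ has a cut vertex or is a subgraph of a larger triangulation-like structure, split along the cut and paste the good orientations, keeping $v_0$ in one piece; one must be mildly careful that the two valid matchings remain a matching after gluing, which they do since the shared vertex can be chosen to play the role of $v_0$ in both pieces). So assume $G$ is a $2$-connected plane graph in $\mathcal{P}_{4,l}$ with outer boundary cycle $C$ containing $v_0$.

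The heart of the argument is a structural lemma: every such $G$ (with enough vertices) contains one of a short list of \emph{reducible configurations} near the boundary — typically a low-degree vertex $v\neq v_0$ with few high-degree neighbours, or a short path of $2$-vertices, or a small-degree vertex on a triangle adjacent to another small-degree vertex. This is proved by a discharging argument: assign charge $d(v)-4$ to each vertex and $d(f)-4$ to each face, so by Euler's formula the total is $-8$ (or $-2\cdot 4$ adjusted for the outer face/boundary condition). Using that $G$ has no $4$-cycles and no $l$-cycles, $3$-faces cannot be adjacent to other short faces and every $\ge 5$-face is "large enough" relative to the missing length $l$; the sparsity from forbidding two cycle lengths forces some vertex of degree $\le 3$ (or a cluster of them) to receive positive final charge, contradicting a configuration-free assumption. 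The key numeric input is exactly the one exploited in \cite{Lih}: $G\in\mathcal{P}_{4,l}$ with $l\in\{5,6,7\}$ is sparse enough that $|E(G)|$ is well below $3|V(G)|$, so average degree is strictly less than some threshold slightly above $3$, giving room for the discharging to succeed while still allowing us to cut out a vertex and orient the rest with out-degree $<3$.

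Once a reducible configuration is found, the orientation step is local and additive. Delete the configuration (or the matching edge $e$) to get $G'\in\mathcal{P}_{4,l}$ with a boundary vertex $v_0$ still available; by induction $G'$ has a valid matching $M'$ and good orientation $D'$. Then I extend $D'$ to $G$: orient the deleted/newly-present edges so that each returned vertex $v$ gets $d^+(v)\le 2$, and — crucially — so that $\mathrm{diff}$ stays nonzero. For the AT-preservation I would use the standard "adding a vertex of small back-degree" trick (as in the Alon–Tarsi/Hefetz-type arguments, used in \cite{GZ}): if $v$ is added with all but $\le 2$ of its edges oriented \emph{into} the already-oriented part, then every spanning Eulerian subdigraph of $D$ restricts to one of $D'$ and conversely the count of Eulerian extensions is controlled, so $\mathrm{diff}(D)=\pm\mathrm{diff}(D')\neq 0$; when two matched vertices are returned, the matching edge is left \emph{out} of $G-M$ entirely, so no orientation question arises for it. Combining $M=M'$ (or $M=M'\cup\{e\}$, still valid and still a matching since the new vertices were not in $G'$) with the extended $D$ completes the induction.

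The main obstacle — and the bulk of the paper's work — is the discharging analysis: proving that the reducible-configuration list is complete for \emph{all three} values $l\in\{5,6,7\}$ simultaneously, and doing so in a form where each configuration is genuinely deletable while respecting both the out-degree bound $<3$ \emph{and} the constraint $d^+(v_0)=0$ (so $v_0$ must be excluded from every configuration, forcing extra care for vertices near $v_0$). A secondary subtlety is ensuring the matching edge chosen for deletion is incident to no vertex we also want to delete for other reasons, and that after deletion the graph remains $2$-connected or is handled by the connectivity reduction above; I expect a fair amount of case-checking there, but no conceptually new ideas beyond the discharging.
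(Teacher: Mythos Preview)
Your overall strategy matches the paper: minimum counterexample, reduce to $2$-connected, initial charge $d(x)-4$ on $V\cup F$ summing to $-8$, and extend good orientations via the observation that if all arcs between $X$ and $V\setminus X$ point one way then $\mathrm{diff}(D)=\mathrm{diff}(D[X])\cdot\mathrm{diff}(D[V\setminus X])$ (this is the paper's Lemma~\ref{AT}, and is exactly your ``small back-degree'' trick).

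A few points in your sketch are off and would mislead you if you tried to fill in the details. First, the reducible configurations are \emph{not} ``near the boundary''; they may sit anywhere in $G$, the only constraint being that $v_0$ is not one of their vertices. Second, a minimum counterexample has no $2$-vertex other than possibly $v_0$ (Lemma~\ref{2-connected}), so ``a short path of $2$-vertices'' is never a configuration; the basic one is instead two adjacent $3$-vertices (Lemma~\ref{3adj3}). Third, the paper does \emph{not} run one discharging argument for all three $l$ simultaneously: the reducibility lemmas are shared, but the discharging rules and the final-charge check are done in three separate sections, one per $l\in\{5,6,7\}$, because which face sizes are absent changes which transfers are needed.

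The substantive idea you have not anticipated is the \emph{triangle-chain} machinery (Definition~\ref{def-chain}, Lemmas~\ref{neighbour-minor}--\ref{dist-minor}): the delicate reducible configurations are not single low-degree vertices but chains of $(4,4,4)$-triangles terminating at a $(3,4,4)$ ``minor'' triangle, and the matching $M$ is constructed to contain one edge from each triangle in such a chain. Without these chain lemmas the discharging for $l=5$ (and, via a ``special $5$-cycle'' lemma, for $l=7$) does not close; they are what lets a $6$-face or $5$-face that is overloaded with adjacent triangles pull charge from nearby $5^+$-vertices.
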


The proof of Theorem \ref{main-thm} uses  discharging method. We shall first describe a family of reducible configurations, i.e., configurations that cannot be contained in a minimum counterexample of Theorem \ref{main-thm}. Then describe a discharging procedure that leads to a contradiction to the Euler's formula. 

We shall frequently use the following lemma in the later proofs.

\begin{lemma}\label{AT}
	 Assume $D$ is a digraph with $V(D)=X_1\cup X_2$ and $X_1 \cap X_2 = \emptyset$. If  all the arcs between $X_1$ and $X_2$ are from $X_1$ to $X_2$. Then $D$ is AT if and only if $D[X_1]$ and $D[X_2]$ are both AT.
\end{lemma}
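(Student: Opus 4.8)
\textbf{Proof proposal for Lemma \ref{AT}.}

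The plan is to set up a bijection (or a sign-preserving bijection) between the spanning Eulerian sub-digraphs of $D$ and the Cartesian product of the spanning Eulerian sub-digraphs of $D[X_1]$ and $D[X_2]$, and then observe that this bijection splits the even/odd count as a product. First I would argue that if $D'$ is any spanning Eulerian sub-digraph of $D$, then $D'$ contains no arc between $X_1$ and $X_2$: indeed, all such arcs of $D$ go from $X_1$ to $X_2$, so the sub-digraph of $D'$ consisting of those arcs has every vertex of $X_1$ as a potential tail and every vertex of $X_2$ as a potential head; counting arcs crossing the cut $(X_1,X_2)$ in $D'$, the balance condition $d^+_{D'}(v)=d^-_{D'}(v)$ summed over $v\in X_1$ forces the number of arcs leaving $X_1$ to equal the number entering $X_1$, but no arc of $D$ enters $X_1$ from $X_2$, hence no arc of $D'$ leaves $X_1$ for $X_2$ either. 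Therefore every spanning Eulerian sub-digraph of $D$ decomposes uniquely as $D'_1 \cup D'_2$ with $D'_i$ a spanning Eulerian sub-digraph of $D[X_i]$, and conversely any such pair glues to a spanning Eulerian sub-digraph of $D$.

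Next I would track the parity of the number of arcs. Under the correspondence $D' \leftrightarrow (D'_1, D'_2)$ we have $|A(D')| = |A(D'_1)| + |A(D'_2)|$, so $D'$ has an even number of arcs iff $|A(D'_1)|$ and $|A(D'_2)|$ have the same parity. Hence
\[
|\mathcal{E}_e(D)| = |\mathcal{E}_e(D[X_1])|\cdot|\mathcal{E}_e(D[X_2])| + |\mathcal{E}_o(D[X_1])|\cdot|\mathcal{E}_o(D[X_2])|,
\]
\[
|\mathcal{E}_o(D)| = |\mathcal{E}_e(D[X_1])|\cdot|\mathcal{E}_o(D[X_2])| + |\mathcal{E}_o(D[X_1])|\cdot|\mathcal{E}_e(D[X_2])|.
\]
Subtracting the two identities and factoring gives ${\rm diff}(D) = {\rm diff}(D[X_1])\cdot{\rm diff}(D[X_2])$.

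Finally, the lemma follows immediately: ${\rm diff}(D)\ne 0$ if and only if both ${\rm diff}(D[X_1])\ne 0$ and ${\rm diff}(D[X_2])\ne 0$, which is exactly the statement that $D$ is AT iff $D[X_1]$ and $D[X_2]$ are both AT. I expect the only delicate point to be the first step — verifying that a one-directional cut forces every spanning Eulerian sub-digraph to avoid the cut entirely; once that is established, the parity bookkeeping and the factorization of ${\rm diff}$ are routine.
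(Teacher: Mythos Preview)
Your proof is correct and follows essentially the same approach as the paper: show that no arc crossing the cut can lie in a spanning Eulerian sub-digraph, deduce the bijection with pairs, and factor ${\rm diff}(D)$ as a product. The only cosmetic difference is in the first step---the paper argues via the cycle decomposition of Eulerian digraphs (no directed cycle can cross a one-way cut), whereas you use the equivalent degree-sum/cut-counting argument; everything else is identical.
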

\begin{proof} 
	Denote by $D_1$ and $D_2$ the sub-digraphs $D[X_1]$ and $D[X_2]$ of $D$, respectively. Note that the set of arcs of an Eulerian digraph can be decomposed into arc disjoint union of directed cycles. Since all the arcs between $X_1$ and $X_2$ are from $X_1$ to $X_2$, and hence none of them is contained in a directed cycle, we conclude that none of these arcs is contained in an Eulerian sub-digraphs of $D$. Hence each Eulerian sub-digraph $H$ of $D$ is the arc disjoint union of an Eulerian sub-digraph $H_1$ of $D_1$ and an Eulerian sub-digraph $H_2$ of $D_2$. Now $H$ is even if and only if $H_1,H_2$ have the same parity. Hence 
	 $|{\cal E}_e(D)|=|{\cal E}_e(D_1)|\times|{\cal E}_e(D_2)|+|{\cal E}_o(D_1)|\times|{\cal E}_o(D_2)|$, $|{\cal E}_o(D)|=|{\cal E}_e(D_1)|\times|{\cal E}_o(D_2)|+|{\cal E}_o(D_1)|\times|{\cal E}_e(D_2)|$. This implies that ${\rm diff}(D) = {\rm diff}(D_1) \times {\rm diff}(D_2)$. Thus, $D$ is AT if and only if $D_1$ and $D_2$ are both AT. \qed
\end{proof}

\par\bigskip
 Assume Theorem \ref{main-thm} is not true and $G$ is a counterexample with   minimum number of vertices. Let $f_0$ denote the outer face of $G$.  
\begin{lemma}\label{2-connected}
	 $G$ is $2$-connected. Moreover, $d_G(v)\ge 3$ for all $v\in V(G)-\{v_0\}$.
\end{lemma}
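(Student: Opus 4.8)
The plan is a minimal-counterexample argument. Assuming $G$ fails one of the two conclusions, I will cut it into strictly smaller plane graphs in $\mathcal{P}_{4,l}$, apply the minimality of $G$ to each, and glue the resulting valid matchings and good orientations back together into a valid matching $M$ of $(G,v_0)$ and a good orientation of $G-M$, contradicting the choice of $G$. The gluing of AT-ness is always carried out through Lemma \ref{AT}, and the feature that makes this possible is that a good orientation has $d^+(v_0)=0$: this makes $v_0$ (or a cut vertex playing $v_0$'s role in one of the pieces) a local sink, so every arc incident to it points the same way and Lemma \ref{AT} can peel it off. One first checks that plane graphs with at most two vertices are not counterexamples, so $|V(G)|\ge 3$.

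For $2$-connectedness, suppose first $G$ is disconnected, say $G=G_1\cup G_2$ with $v_0\in V(G_1)$; applying the inductive hypothesis to $(G_1,v_0)$ and to $(G_2,w)$ for some $w$ on the outer boundary of $G_2$, I get valid matchings $M_1,M_2$ and good orientations $D_1,D_2$; then $M_1\cup M_2$ is valid, $D_1\cup D_2$ still satisfies $\Delta^+<3$ and $d^+(v_0)=0$, and it is AT by Lemma \ref{AT} since no arc runs between $V(G_1)$ and $V(G_2)$. Now suppose $G$ is connected with a cut vertex $u$, and write $G=G_1\cup G_2$ with $G_1\cap G_2=\{u\}$. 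If $u=v_0$, apply the inductive hypothesis to $(G_1,v_0)$ and $(G_2,v_0)$: the two matchings avoid $v_0$, so their union is a matching; since $d^+_{D_i}(v_0)=0$ each $D_i-v_0$ is AT by Lemma \ref{AT}, and as $D_1-v_0$ and $D_2-v_0$ are vertex-disjoint while every arc of $D_1\cup D_2$ at $v_0$ enters $v_0$, further applications of Lemma \ref{AT} (peel off $v_0$, then split the two disjoint parts) show $D_1\cup D_2$ is AT. If $u\neq v_0$, say $v_0\in V(G_1)\setminus\{u\}$, apply the inductive hypothesis to $(G_1,v_0)$ and $(G_2,u)$: now $d^+_{D_2}(u)=0$, so $D_2-u$ is AT, every arc of $D_1\cup D_2$ joining $V(D_2)\setminus\{u\}$ to $V(D_1)$ is a $D_2$-arc entering $u$, and Lemma \ref{AT} applied with parts $V(D_2)\setminus\{u\}$ and $V(D_1)$ gives that $D_1\cup D_2$ is AT; moreover $d^+_{D_1\cup D_2}(u)=d^+_{D_1}(u)\le 2$, the conditions $\Delta^+<3$ and $d^+(v_0)=0$ persist, and the two matchings combine to a matching avoiding $v_0$ since their only possible common vertex is $u$, which $M_2$ leaves uncovered. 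Each case contradicts minimality, so $G$ is $2$-connected; in particular $\delta(G)\ge 2$.

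Finally, suppose some $v\neq v_0$ has $d_G(v)\le 2$; then $d_G(v)=2$, with neighbours $a$ and $b$. The graph $G'=G-v$ lies in $\mathcal{P}_{4,l}$ and has $v_0$ on its outer face (deleting a vertex only merges faces), so the inductive hypothesis gives a valid matching $M'$ of $(G',v_0)$ and a good orientation $D'$ of $G'-M'$. Put $M=M'$ and extend $D'$ to $G-M'$ by orienting both $va$ and $vb$ away from $v$: then $d^+(v)=2<3$, the out-degrees of $a$ and $b$ are unchanged, $d^+(v_0)=0$ is preserved (if $v_0\in\{a,b\}$ the new arc enters $v_0$), and since $v$ now has in-degree $0$, Lemma \ref{AT} with $X_1=\{v\}$ and $X_2=V(G)-v$ shows the extended orientation is AT. So $G-M'$ has a good orientation, the desired contradiction, and every $v\neq v_0$ has $d_G(v)\ge 3$. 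The only step I expect to require real care is the AT-gluing across a cut vertex, where the two oriented pieces share the vertex $u$, so Lemma \ref{AT} does not apply verbatim and one must genuinely exploit $d^+(u)=0$ on one side to split $u$ off first; the remaining checks --- that unions of matchings stay matchings, that the out-degree conditions survive the gluing, and that vertex deletion keeps the graph in $\mathcal{P}_{4,l}$ with $v_0$ on the outer face --- are routine.
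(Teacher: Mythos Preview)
Your proof is correct and follows essentially the same approach as the paper's: split at a cut vertex (or delete the low-degree vertex), apply minimality to the pieces, and glue the resulting good orientations via Lemma~\ref{AT} using the $d^+=0$ property at the distinguished vertex. The paper economizes by directly choosing an end block $B$ not containing $v_0$, with unique cut vertex $z^*$, so that your separate treatment of the disconnected case and of the case $u=v_0$ becomes unnecessary, but the substance is identical.
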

\begin{proof}
	 Assume $G$ is not $2$-connected. Let $B$ be a block of $G$ that contains a unique cut vertex $z^*$ and does not contain $v_0$. Let $G_1=G-(B-\{z^*\})$. By the minimality, $(G_1, v_0)$ has a valid matching $M_1$ and there is a good orientation $D_1$ of $G_1-M_1$, $(B, z^*)$ has a valid matching $M_2$ and there is a good orientation $D_2$ of $B-M_2$. Let $M=M_1\cup M_2$ and $D=D_1\cup D_2$. Applying Lemma \ref{AT} (with $X_1=V(B)-\{z^*\}$ and $X_2=V(G_1)$), $D$ is AT. So $M$ is a valid matching of $(G, v_0)$, and $G-M$ has a good orientation, a contradiction. 
	 
	 For the moreover part, assume to the contrary that $v\in V(G)-\{v_0\}$ and $d_G(v)\le2$. By induction hypothesis, $G'=G-\{v\}$ has a valid matching $M$ such that $G'-M$ has a good orientation $D'$. Extend $D'$ to an orientation $D$ of $G-M$ in which $v$ is a source vertex. It is obvious that $D$ is a good orientation of $G-M$.  \qed
\end{proof}

\begin{lemma}\label{3adj3} 
	  $G-\{v_0\}$ does not contain two adjacent $3$-vertices.
\end{lemma}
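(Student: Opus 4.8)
The plan is to show that a $(3,3)$-edge in $G-\{v_0\}$ is a reducible configuration, thereby contradicting the minimality of $G$. So suppose $u,v$ are adjacent $3$-vertices of $G$ with $u,v\neq v_0$. Let $u_1,u_2$ be the two neighbours of $u$ other than $v$, and $v_1,v_2$ the two neighbours of $v$ other than $u$ (incidentally, $u$ and $v$ can have at most one common neighbour, else $G$ contains a $4$-cycle, though this fact is not actually needed). Set $G'=G-\{u,v\}$. Then $G'$ is a plane graph, $G'\in\mathcal{P}_{4,l}$ since deleting vertices creates no new cycles, $|V(G')|<|V(G)|$, and $v_0$ still lies on the boundary of $G'$ because deleting vertices only enlarges the outer face. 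By the minimality of $G$, Theorem \ref{main-thm} applies to $(G',v_0)$: there is a valid matching $M'$ of $(G',v_0)$ and a good orientation $D'$ of $G'-M'$.

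Next I would put the edge $uv$ into the matching, setting $M=M'\cup\{uv\}$. Since $u,v\notin V(G')$, the edge $uv$ is disjoint from every edge of $M'$, so $M$ is a matching, and it avoids $v_0$ because $u,v\neq v_0$; hence $M$ is a valid matching of $(G,v_0)$. The edges of $G-M$ not already in $G'-M'$ are exactly $uu_1,uu_2,vv_1,vv_2$, so I extend $D'$ to an orientation $D$ of $G-M$ by orienting $uu_1,uu_2$ out of $u$ and $vv_1,vv_2$ out of $v$. Then $d^+_D(u)=d^+_D(v)=2$, while the out-degree of every other vertex is unchanged from $D'$; consequently $\Delta^+_D(w)<3$ for all $w$, and $d^+_D(v_0)=d^+_{D'}(v_0)=0$. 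Finally, apply Lemma \ref{AT} with $X_1=\{u,v\}$ and $X_2=V(G')$: there is no arc inside $X_1$ because $uv\in M$, and every arc between $X_1$ and $X_2$ is directed from $X_1$ to $X_2$. Since $D[X_1]$ has no arcs it is AT, and $D[X_2]=D'$ is AT because $D'$ is good; therefore ${\rm diff}(D)={\rm diff}(D[X_1])\cdot{\rm diff}(D')\neq 0$, so $D$ is AT. Thus $D$ is a good orientation of $G-M$, contradicting the choice of $G$.

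There is no genuinely hard step here; the only thing worth noting is why the edge $uv$ \emph{must} be placed in the matching. If we tried to keep all of $u,v$'s edges, then each of $uu_1,uu_2,vv_1,vv_2$ would have to be oriented away from $\{u,v\}$ (orienting it the other way would raise the out-degree of a vertex of $G'$ to $3$), already using out-degree $2$ at each of $u$ and $v$, and then the edge $uv$ would force out-degree $3$ at one of them. Removing $uv$ via the matching is exactly what frees up the construction, after which the verification of goodness and the application of Lemma \ref{AT} are routine bookkeeping.
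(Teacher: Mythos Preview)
Your proof is correct and follows essentially the same approach as the paper: delete $u$ and $v$, apply minimality, add $uv$ to the matching, and make $u,v$ sources in the extended orientation. You are simply more explicit than the paper in justifying that $D$ is AT via Lemma~\ref{AT}; the paper states this conclusion without spelling out the partition.

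One small remark on your closing paragraph: the claim that orienting an edge like $u_1u$ toward $u$ ``would raise the out-degree of a vertex of $G'$ to $3$'' is not guaranteed (some $u_i$ might have out-degree $\le 1$ in $D'$), so the argument that $uv$ \emph{must} go into the matching is heuristic rather than forced. This does not affect the proof itself, which is complete as written in your first two paragraphs.
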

\begin{proof}   Assume to the contrary that $uv\in E(G)$ with $d(u)=d(v)=3$ and $u, v\ne v_0$. Let $G^*=G-\{u, v\}$. Then $(G^*, v_0)$ has a valid matching $M^*$ such that there exists a good orientation $D^*$ of $G^*-M^*$. Let $M=M^*\cup\{uv\}$. Then $M$ is a valid matching of $(G, v_0)$. Extend $D^*$ to an orientation  $D$ of $G-M$ in which $u,v$ are sources.
	Then $D$ is a good orientation of $G-M$.  \qed
\end{proof}

\begin{definition}
	\label{def-minortriangle}
	 A $3$-face $f$  is called a {\em minor triangle} if $f$ is a $(3, 4, 4)$-face and $v_0$ is not on $f$. A $3$-vertex $v$ is called a {\em minor $3$-vertex} if $v$ is incident to a triangle and $v\ne v_0$.
\end{definition}

\begin{definition}
	\label{def-chain}
	A  {\em triangle chain} in $G$ of length $k$ is a subgraph of $G-\{v_0\}$ consisting of vertices $w_1,w_2,\ldots, w_{k+1}, u_1,u_2, \ldots, u_k$ in which $[w_iw_{i+1}u_i]$ is a $(4,4,4)$-face for $i=1,2,\ldots, k$, as depicted in Figure \ref{fig1}(a). 
	We denote by $T_i$ the triangle $[w_iw_{i+1}u_i]$ 
	and denote such a triangle chain by $T_1T_2\ldots T_k$.
	For convenience, a single $4$-vertex is a triangle chain with $0$ triangles.  We say a triangle $T$ intersects a  triangle chain $T_1T_2\dots T_k$, if $T$ has one common vertex with $T_1$.
\end{definition}

\begin{lemma}\label{neighbour-minor} 
  If a minor triangle $T_0$ intersects   a  triangle chain $T_1T_2\dots T_k$, then no vertex of $T_k$ is adjacent to a $3$-vertex, except possibly $v_0$.  In particular, the $k=0$ case implies that no vertex of a minor triangle $T_0$ is adjacent to a $3$-vertex $v \in V(G) - (V(T_0) \cup \{v_0\})$.  
\end{lemma}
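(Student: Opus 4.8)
The plan is to argue by contradiction inside the minimal counterexample $G$ already fixed, so that Lemmas~\ref{2-connected} and \ref{3adj3} are available. Suppose the conclusion fails: some vertex $x$ of $T_k$ is adjacent to a $3$-vertex $z\neq v_0$. Every vertex of $T_0$ except its apex $a$, and every vertex of the chain $T_1\cdots T_k$, is a $4$-vertex, so $z$ can lie on $T_0$ or on the chain only if $z=a$; that possibility I would treat by a short direct argument using the absence of $4$- and $l$-cycles (here $a$, being adjacent to $x$ on $T_k$ and to the vertex $T_0$ shares with $T_1$, would link the two ends of the chain). So assume $z\notin V(T_0)\cup V(T_1\cdots T_k)$. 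I would also reduce to $x\in\{w_{k+1},u_k\}$ by induction on the chain length: the base case $k=0$ is the minor-triangle statement itself (the ``chain'' being one of the two $4$-vertices of $T_0$), and for $k\ge1$, since $T_0$ also intersects the shorter chain $T_1\cdots T_{k-1}$ and $w_k\in V(T_{k-1})$, the case $x=w_k$ is excluded by the statement for chains of length $k-1$.

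The core step is deletion and reattachment. Put $S=V(T_0)\cup V(T_1\cdots T_k)\cup\{z\}$ (for $k=0$, $S=V(T_0)\cup\{z\}$). Since $v_0\notin S$, the graph $G'=G-S$ lies in $\mathcal{P}_{4,l}$, has fewer vertices than $G$, and keeps $v_0$ on its boundary, so by minimality $(G',v_0)$ has a valid matching $M'$ with a good orientation $D'$ of $G'-M'$. I would extend this to $G$ as follows: choose a matching $M''$ of $G[S]$ and set $M=M'\cup M''$, which is automatically a valid matching of $(G,v_0)$ since $M''$ lies inside $S$ and $v_0\notin S$; orient every edge between $S$ and $G'$ out of $S$; and orient $G[S]-M''$ into an AT orientation $D[S]$ with $d^+_{D[S]}(v)\le 2-e_{G'}(v)$ for every $v\in S$, where $e_{G'}(v)$ denotes the number of edges from $v$ to $G'$. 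Granting this, Lemma~\ref{AT} applied with $X_1=S$ shows the combined orientation $D$ of $G-M$ is AT; all its out-degrees are $<3$ by construction; and $d^+_D(v_0)=d^+_{D'}(v_0)=0$ since $v_0\in V(G')$. Hence $D$ is a good orientation of $G-M$, contradicting the choice of $G$.

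The main obstacle is producing $M''$ together with the orientation of $G[S]-M''$ meeting that out-degree bound, because the budget is essentially tight: $\sum_{v\in S}(2-e_{G'}(v))$ equals $|E(G[S])|-|M''|$ only when $M''$ is nearly a perfect matching of $G[S]$. To make this work I would use that every vertex of the chain and of $T_0$ other than $a$ has degree exactly $4$ — so the interior backbone vertices $w_2,\dots,w_k$ have all their neighbours inside $S$, i.e.\ $e_{G'}=0$ there — that $G$ has no $4$-cycles, which caps each $e_{G'}(v)$ and (with a judicious choice of $M''$) lets one make $G[S]-M''$ a forest, hence automatically AT, and Lemmas~\ref{2-connected} and \ref{3adj3} (for instance $z$'s other two neighbours are $4^+$-vertices). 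The analysis then divides into cases according to which vertex of $T_1$ is shared with $T_0$ (one of $w_1,w_2,u_1$) and whether $x=w_{k+1}$ or $x=u_k$. The genuinely delicate point, in each case, is to orient the forest $G[S]-M''$ so that the teeth $u_i$ and the far-end vertices of $T_k$ — those with the smallest out-degree allowance — do not overflow; balancing these competing demands along the backbone path is where the real work lies.
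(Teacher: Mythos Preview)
Your framework is exactly the paper's: delete the configuration $S$, apply minimality to $(G-S,v_0)$, orient all $S$--$(V\setminus S)$ edges out of $S$, and extend via Lemma~\ref{AT}. Where you diverge is in the execution of the key step. You describe the construction of the internal matching $M''$ and the orientation of $G[S]-M''$ as a ``genuinely delicate'' multi-case analysis (splitting on which vertex $T_0$ shares with $T_1$, whether $x=w_{k+1}$ or $x=u_k$, and then ``balancing competing demands along the backbone''), and you do not actually carry it out. The paper shows this step is a one-liner once the right matching is written down.

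First, the case split is unnecessary: the roles of $w_1,u_1$ in $T_1$ and of $w_{k+1},u_k$ in $T_k$ are symmetric, so after relabelling one may take $T_0=[w_0w_1u_0]$ with $d(w_0)=3$, the shared vertex with $T_1$ equal to $w_1$, and the offending $3$-vertex $x$ adjacent to $w_{k+1}$. Then the paper sets
\[
M'' \;=\; \{\,w_0u_0,\; w_1u_1,\; \ldots,\; w_ku_k,\; w_{k+1}x\,\},
\]
which is a \emph{perfect} matching on $S$. After deleting $M''$, the graph $G[S]-M''$ is just the path $w_0w_1\cdots w_{k+1}$ together with the pendant edges $w_{i+1}u_i$ for $0\le i\le k$, with $x$ isolated---a forest, hence automatically acyclic and AT. Orient the path as $(w_i,w_{i+1})$ and the pendants as $(w_{i+1},u_i)$. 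A direct degree count (each interior $w_i$ has all four $G$-neighbours inside $S$; each $u_i$, and each of $w_0,w_{k+1},x$, has exactly two $G$-neighbours outside $S$) shows every vertex of $S$ ends with total out-degree exactly $2$ in $D$. No balancing is needed anywhere.

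So your proposal is not wrong in spirit, but it leaves the decisive step as a promise and overstates its difficulty; writing down the explicit perfect matching $M''$ above collapses the whole ``delicate'' part.
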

\begin{proof}
	Assume to the contrary that  $T_0=[w_0w_1u_0]$ is a minor triangle that intersects   a  triangle chain $T_1T_2\dots T_k$, with  $T_i=[w_iw_{i+1}u_i]$ $(1\le i\le k)$, and $w_{k+1}$ has a neighbour $x$ with  $d(x)=3$, as in Figure \ref{fig1}(b). Assume $w_0$ is a $3$-vertex. Let $X= \cup_{i=0}^kV(T_i) \cup \{x\}$
	and $G'=G-X$. By the minimality of $G$, $(G', v_0)$  contains a valid matching $M'$ and there is a good orientation $D'$ of $G'-M'$.

Let $M=M'\cup   \{w_0u_0, w_1u_1, \dots, w_ku_k, w_{k+1}x\}$.
Then $M$ is a valid matching of $(G, v_0)$. Let $D$ be an orientation of $G-M$ obtained from $D'$ by adding arcs   $(w_i, w_{i+1})$ and $(w_{i+1}, u_i)$ for $i=0, 1, \dots, k$, and all the edges between $X$ and $V-X$ are oriented from $X$ to $V-X$, as depicted in Figure \ref{fig1}(c). Since $D[X]$ is acyclic, $D[X]$ is AT. By Lemma \ref{AT}, $D$ is AT. It is easy to see that $\Delta_{D}^+(v)<3$ and $d^+_D(v_0)=0$. Thus $D$ is a good orientation of $G-M$.
 \qed
\end{proof}

\begin{figure}[htb]
	\centering
	\includegraphics[width=4.6in]{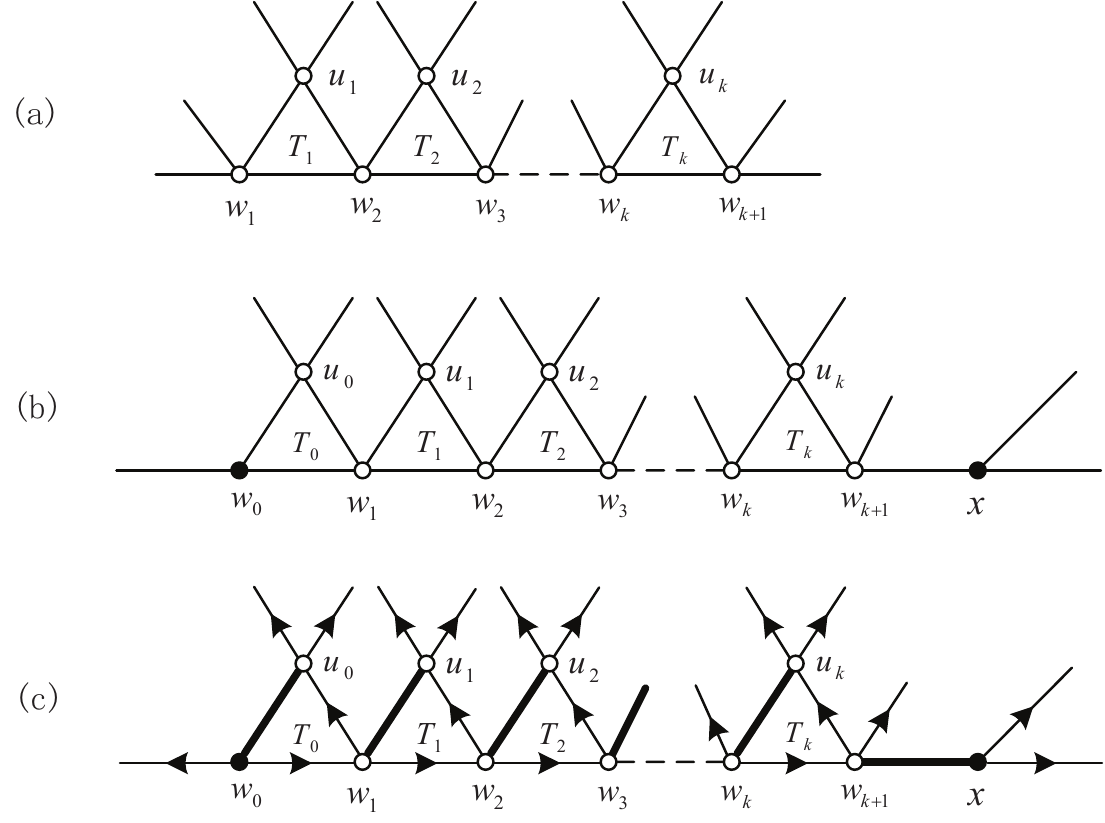}
	\caption{(a) A triangle chain.  (b) The configuration in Lemma \ref{neighbour-minor}. (c) For the proof of Lemma \ref{neighbour-minor}, where a thick line is an edge in the matching $M$.}	
	\label{fig1}
\end{figure} 

\begin{figure}[htb]
	\centering
	\includegraphics[width=4.8in]{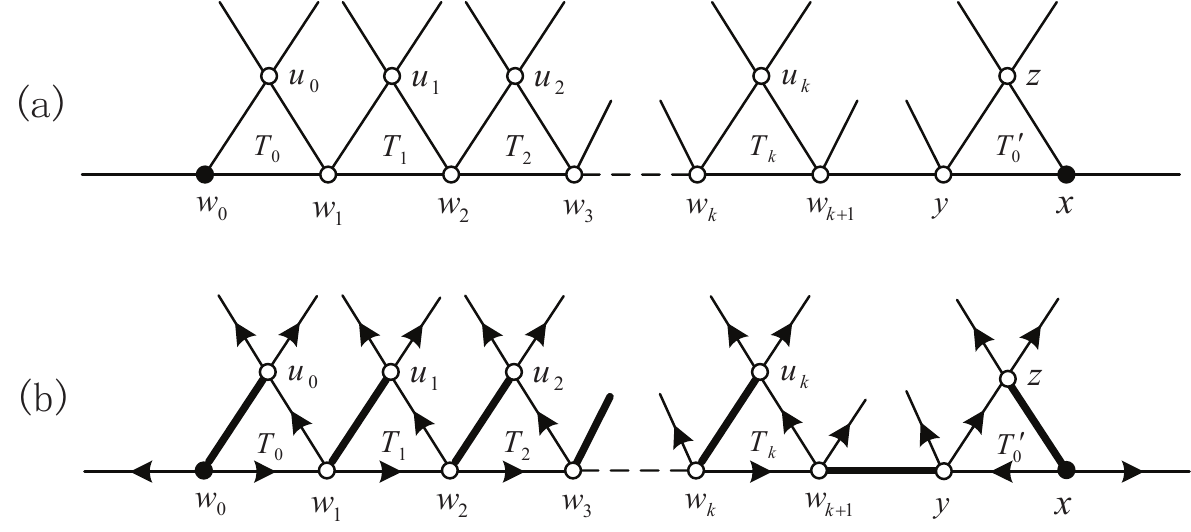}
	\caption{(a) The configuration in Lemma \ref{dist-minor}. (b) For the proof of Lemma \ref{dist-minor}, where a thick line is an edge in the matching $M$.}
	\label{fig11}
\end{figure}
\begin{lemma}\label{dist-minor} 
	   If  a  triangle chain $T_1T_2\dots T_k$ intersects a minor triangle $T_0$, then the distance between $T_k$ and another minor triangle is at least $2$.  In particular, the $k=0$ case implies that any two minor triangles have distance at least $2$. 
\end{lemma}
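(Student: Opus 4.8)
The plan is to argue by the minimality of $G$. Suppose to the contrary that $T_0=[w_0w_1u_0]$ is a minor triangle with $d(w_0)=3$ that intersects a triangle chain $T_1T_2\cdots T_k$, $T_i=[w_iw_{i+1}u_i]$ (so $T_0$ and $T_1$ share $w_1$; read $T_k=T_0$ when $k=0$), and that $T_0'=[w_0'w_1'u_0']$ with $d(w_0')=3$ is a second minor triangle at distance at most $1$ from $T_k$, as in Figure~\ref{fig11}(a). We will produce a valid matching $M$ of $(G,v_0)$ together with a good orientation of $G-M$, contradicting the choice of $G$.

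\textbf{Structural analysis.} First I would pin down the configuration. Applying Lemma~\ref{neighbour-minor} to each initial sub-chain $T_1\cdots T_i$ ($1\le i\le k$), and, in its $k=0$ form, to $T_0'$ itself, and using that $G$ has no $4$-cycle (so two distinct triangles of $G$ share at most a vertex, never an edge; in particular $w_0'\ne w_0$), one checks that $T_0'$ is vertex-disjoint from every $T_i$ ($0\le i\le k$): a common $4$-vertex of $T_0'$ and $T_i$ would be a vertex of $T_i$ adjacent to the $3$-vertex $w_0'\notin V(T_0)\cup\cdots\cup V(T_i)\cup\{v_0\}$, contradicting Lemma~\ref{neighbour-minor}. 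Hence the distance between $T_k$ and $T_0'$ is exactly $1$, realized by an edge $zy$ with $z\in V(T_k)$ and $y\in V(T_0')$. Neither endpoint is the $3$-vertex of its triangle: $y\ne w_0'$ because $w_0'$ has no neighbour in $V(T_k)$ by the same lemma, and (when $k=0$) $z\ne w_0$ because otherwise $w_0$ would be a $3$-vertex adjacent to a vertex of the minor triangle $T_0'$, again contradicting Lemma~\ref{neighbour-minor}. So $z$ is a $4$-vertex of $T_k$ and $y$ a $4$-vertex of $T_0'$. Finally $z$ is not the vertex of $T_k$ shared with the rest of the chain: that vertex, $w_k$, already has all four of its neighbours inside $V(T_0)\cup\cdots\cup V(T_k)$ when $k\ge1$, and equals the already-excluded $w_0$ when $k=0$. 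The two remaining vertices of $T_k$ play symmetric roles, so we may assume $z=w_{k+1}$ and, relabelling the two $4$-vertices of $T_0'$, $y=w_1'$.

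\textbf{The reduction.} Put $X=V(T_0)\cup V(T_1)\cup\cdots\cup V(T_k)\cup V(T_0')$ and $G'=G-X$. By minimality, $(G',v_0)$ has a valid matching $M'$ and a good orientation $D'$ of $G'-M'$. Let
$$M=M'\cup\{w_0u_0,\ w_1u_1,\ \ldots,\ w_ku_k,\ w_{k+1}w_1',\ w_0'u_0'\}.$$
By the disjointness established above this is a matching, and it avoids $v_0$ (no $T_i$ nor $T_0'$ contains $v_0$), so it is valid. Extend $D'$ to an orientation $D$ of $G-M$ by orienting every edge between $X$ and $V-X$ out of $X$, and orienting the edges of $(G-M)[X]$ from lower to higher in the linear order $w_0,w_1,\ldots,w_{k+1},u_0,\ldots,u_k,w_0',w_1',u_0'$ (the pattern of Figure~\ref{fig11}(b)). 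Then $D[X]$ is acyclic, hence AT, so $D$ is AT by Lemma~\ref{AT} with $X_1=X$ and $X_2=V-X$. Moreover $\Delta_D^+(v)\le2$ for every $v$: each of $w_0,w_0'$ is a $3$-vertex covered by $M$; and every $4$-vertex of $X$ has exactly one incident edge in $M$ and, in the orientation above, at least one in-arc from within $X$ (from its predecessor $w_{i-1}$, from $w_{i+1}$ in the case of $u_i$, or from $w_k$, $w_0'$, $w_1'$ in the cases of $w_{k+1}$, $w_1'$, $u_0'$ respectively), hence at most two out-arcs; vertices outside $X$ keep their out-degree from $D'$, and the arcs leaving $X$ only add to in-degrees. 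Finally $d_D^+(v_0)=d_{D'}^+(v_0)=0$. Thus $D$ is a good orientation of $G-M$, a contradiction.

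\textbf{Main obstacle.} The delicate part is the structural analysis, not the reduction: one must carefully rule out every way $T_0'$ could overlap or attach to the chain --- the various shared-vertex cases and the degenerate possibility $w_0'=w_0$ --- by combining Lemma~\ref{neighbour-minor} applied to all initial sub-chains (and to $T_0'$) with the absence of $4$-cycles. Once the configuration is reduced to the single pattern above, the extension of the matching and orientation and the out-degree bookkeeping are routine and can essentially be read off from Figure~\ref{fig11}(b).
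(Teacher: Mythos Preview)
Your proof is correct and follows essentially the same approach as the paper: delete $X=\bigcup_{i=0}^k V(T_i)\cup V(T_0')$, extend the matching by $\{w_iu_i:0\le i\le k\}\cup\{w_{k+1}w_1',\,w_0'u_0'\}$, and orient $(G-M)[X]$ acyclically while sending all $X$--$(V\setminus X)$ edges outward; up to relabelling ($x=w_0'$, $y=w_1'$, $z=u_0'$) this is exactly the paper's construction. The only difference is that you spell out the structural reduction to the single configuration of Figure~\ref{fig11}(a) in detail---disjointness of $T_0'$ from every $T_i$, the fact that the connecting edge is a $(4,4)$-edge, and that its $T_k$-end is not the shared vertex $w_k$---whereas the paper compresses all of this into the single sentence ``By Lemma~\ref{neighbour-minor}, we may assume $w_{k+1}y$ is a $(4,4)$-edge connecting $T_k$ and $T_0'$.''
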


\begin{proof}
Assume to the contrary that $T_1T_2\dots T_k$ with  $T_i=[w_iw_{i+1}u_i]$ $(1\le i\le k)$ is a triangle chain that intersects a minor triangle $T_0=[w_0w_1u_0]$, and the distance between $T_k$ and another minor triangle  $T_0'=[xyz]$  with $d(x)=3$ is less than 2. By Lemma \ref{neighbour-minor}, we may assume $w_{k+1}y$ is a $(4, 4)$-edge connecting $T_k$ and $T_0'$, as in Figure \ref{fig11}(a). Let $X= \cup_{i=0}^kV(T_i) \cup V(T'_0)$ and $G'=G-X$. Then $(G', v_0)$ has a valid matching $M'$ and there is a good orientation $D'$ of $G'-M'$.

Let $M=M'\cup \{w_0u_0, w_1u_1, \dots, w_ku_k, w_{k+1}y, xz\}$.
Then $M$ is a valid matching of $(G, v_0)$. Let $D$ be an orientation of $G-M$ obtained from $D'$ by adding arcs $(x, y)$,  $(y, z)$, $(w_i, w_{i+1})$ and $(w_{i+1}, u_i)$ for $i=0, 1, \dots, k$, and all the edges between $X$ and $V-X$ are oriented from $X$ to $V-X$, as in Figure \ref{fig11}(b). Obviously, $D[X]$ is acyclic, so $D[X]$ is AT. By Lemma \ref{AT}, $D$ is AT. Additionally, $\Delta_{D}^+(v)<3$ and $d^+_D(v_0)=0$. That is to say, $D$ is a good orientation of $G-M$, a contradiction.\qed
\end{proof}

\par\bigskip

The remainder of the proofs   use a discharging procedure. The \emph{initial charge} $ch$ is defined as: $ch(x)=d(x)-4$ for
$x\in V(G)\cup F(G)$. Applying equalities $\sum\limits_{v\in
	V(G)}d(v)=2|E(G)|=\sum\limits_{f\in F(G)}d(f) $ and Euler's formula
$|V(G)|-|E(G)|+|F(G)|=2$, we conclude that
$$\sum_{x\in V(G)\cup F(G)}ch(x)=-8.$$

In a discharging procedure,
$ch(x\rightarrow y)$ denotes the charge discharged from an element
$x$ to another element $y$,  $ch(x\rightarrow)$ and
$ch(\rightarrow x)$ denote the charge totally discharged from or
to $x$, respectively.
The \emph{final charge} $ch^*(x)$ of $x \in V(G) \cup E(G)$ is defined as $ch^*(x) = ch(x) - ch(x\rightarrow)+ch(\rightarrow x)$.
By applying  appropriate discharging rules, we shall arrive at a final charge  that $ch^*(x)\ge 0$ for all $x\in V(G)\cup F(G)\setminus\{v_0, f_0\}$,  and $ch^*(v_0)+ch^*(f_0) > -8$. As the total charge does not change in the discharging process, this is a contradiction. 

The discharging rules for graphs $G \in \mathcal{P}_{4,l}$ for $l \in \{5,6,7\}$ are different. We use three sections to discuss  graphs $G \in \mathcal{P}_{4,l}$ for $l \in \{5,6,7\}$, respectively.

\section{Planar graphs without $4$- and $5$-cycles}

This section considers plane graphs without $4$- and $5$-cycles. We first derive  more  properties of a minimal counterexample $G$ to Theorem \ref{main-thm}, where $G \in \mathcal{P}_{4,5}$.

\begin{figure}[htb]
	\centering
	\includegraphics[width=5in]{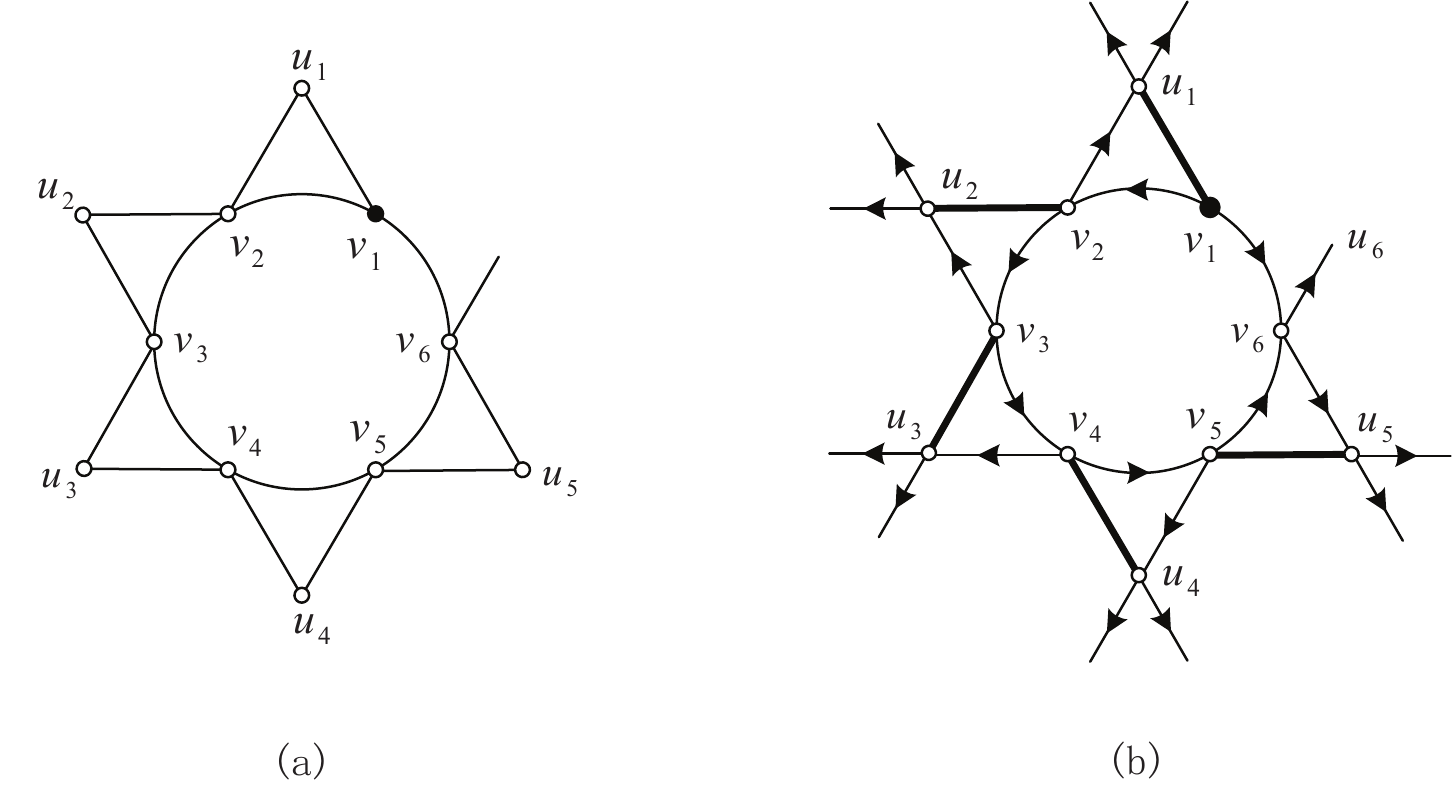}
\caption{(a) The configuration in Lemma \ref{sun}. (b) For the proof of Lemma \ref{sun}, where a thick line is an edge in the matching $M$}
\label{fig3}
\end{figure}

\begin{lemma}\label{sun}
	Assume $f$ is a $6$-face of $G$ which is adjacent to five triangles, and none   of the vertices in these triangles is $v_0$. If $f$ has one $3$-vertex, then there is at least one $5^+$-vertex on the five triangles.
\end{lemma}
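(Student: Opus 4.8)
The plan is to argue by contradiction. Suppose $f=[x_1x_2x_3x_4x_5x_6]$ is a $6$-face adjacent to five triangles, with no $v_0$ among the triangle vertices, such that $f$ carries a $3$-vertex but no vertex on the five triangles has degree $\ge 5$. Since $G$ is $2$-connected (Lemma~\ref{2-connected}), $f$ is bounded by a $6$-cycle; let $T_i=[x_ix_{i+1}y_i]$ denote the triangle sharing the edge $x_ix_{i+1}$ with $f$, for $i=1,\dots,5$, where $x_6x_1$ is the edge of $f$ not lying on a triangle. Using that $G$ has neither a $4$-cycle nor a $5$-cycle, one checks routinely that the $T_i$ are pairwise distinct, that $y_1,\dots,y_5$ are pairwise distinct with $\{y_1,\dots,y_5\}\cap\{x_1,\dots,x_6\}=\emptyset$, and that $f$ has no chord. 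This preliminary bookkeeping is essentially the only place the absence of short cycles enters this particular proof, and it is mildly tedious but straightforward.

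Next I would pin down the degrees. Each of $x_2,x_3,x_4,x_5$ lies on two of the $T_i$'s and so has the four distinct neighbours $x_{i-1},x_{i+1},y_{i-1},y_i$, which forces $d(x_2)=d(x_3)=d(x_4)=d(x_5)=4$ under the standing assumption. Hence a $3$-vertex of $f$ can only be $x_1$ or $x_6$; since $x_1x_6\in E(G)$ and neither vertex is $v_0$, Lemma~\ref{3adj3} rules out both being $3$-vertices, so (after relabelling) $x_1$ is a $3$-vertex and $d(x_6)=4$. By Lemma~\ref{2-connected} we have $d(y_i)\in\{3,4\}$ for all $i$, and since $x_1y_1$ is an edge with $d(x_1)=3$ and $x_1\ne v_0$, Lemma~\ref{3adj3} forces $d(y_1)=4$; thus $T_1=[x_1x_2y_1]$ is a $(3,4,4)$-face avoiding $v_0$, i.e. a minor triangle.

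The core of the argument is to propagate this along $T_2,\dots,T_5$ using the distance lemma and then invoke Lemma~\ref{neighbour-minor}. If $d(y_2)=3$, then $T_2$ is a $(4,4,3)$-face avoiding $v_0$, hence a minor triangle sharing the vertex $x_2$ with the minor triangle $T_1$, contradicting the $k=0$ case of Lemma~\ref{dist-minor}; so $d(y_2)=4$. Inductively, if $d(y_2)=\dots=d(y_{j-1})=4$ for some $j$ with $3\le j\le5$, then $T_2T_3\cdots T_{j-1}$ is a triangle chain of $(4,4,4)$-faces intersecting the minor triangle $T_1$ (they share $x_2$), and $d(y_j)=3$ would make $T_j$ a minor triangle at distance $0$ from the last chain-triangle $T_{j-1}$, contradicting Lemma~\ref{dist-minor}; hence $d(y_j)=4$ as well. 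Therefore $d(y_1)=\dots=d(y_5)=4$, so $T_2T_3T_4T_5$ is a triangle chain of $(4,4,4)$-faces intersecting the minor triangle $T_1$. By Lemma~\ref{neighbour-minor}, no vertex of the last triangle $T_5=[x_5x_6y_5]$ is adjacent to a $3$-vertex other than $v_0$; but $x_6\in V(T_5)$ is adjacent to $x_1$, which is a $3$-vertex, is not $v_0$, and is not a vertex of $T_5$. This contradiction proves the lemma.

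I expect the main obstacle to be the temptation to attack this by a direct deletion/re-orientation reduction (delete $x_1,\dots,x_6,y_1,\dots,y_5$, borrow a good orientation of the rest by minimality, and extend). That route stalls precisely when every $y_i$ has degree $4$: then each of the four degree-$4$ vertices $x_2,x_3,x_4,x_5$ has two out-forced spokes, so both of its incident $6$-cycle edges would have to be absorbed by the removed matching, which is impossible since the six cycle edges cannot host a matching covering all of $x_2,x_3,x_4,x_5$ in the required way. The resolution is to recognise that this same degree pattern need not be reducible \emph{directly} because it is already forbidden: it forces a $(4,4,4)$-triangle chain of length four hanging off the minor triangle $T_1$, and Lemmas~\ref{dist-minor} and~\ref{neighbour-minor} then close the case. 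Beyond that, the only care needed is the first-paragraph bookkeeping together with repeatedly confirming that the faces produced along the way are genuine minor triangles (degree multiset $\{3,4,4\}$, no $v_0$) and genuine triangle chains in $G-\{v_0\}$.
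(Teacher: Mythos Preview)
Your route differs from the paper's, and both the dismissal of the direct method and your own final step have problems.

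\textbf{The direct reduction does not stall.} Your last paragraph asserts that deleting $X=\{x_1,\dots,x_6,y_1,\dots,y_5\}$ and extending fails when all $y_i$ are $4$-vertices. This is exactly the paper's proof, and it works: take the matching $\{x_1y_1,x_2y_2,x_3y_3,x_4y_4,x_5y_5\}$ and orient $(x_i,x_{i+1})$, $(x_{i+1},y_i)$ for $i=1,\dots,5$ together with $(x_1,x_6)$, sending all $X$--$(V\setminus X)$ edges outward. Each $x_j$ with $2\le j\le 5$ has one spoke $x_jy_j$ absorbed by the matching, one spoke $x_jy_{j-1}$ oriented out, one cycle edge in, one cycle edge out; so $d^+(x_j)=2$. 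Your claim that both spokes at $x_j$ are ``out-forced'' is a miscount --- nothing prevents one spoke from lying in the matching.

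\textbf{Your own contradiction step misapplies Lemma~\ref{neighbour-minor}.} In that lemma (and its proof), the $3$-vertex $x$ adjacent to $w_{k+1}$ is a \emph{new} vertex outside $V(T_0)\cup\cdots\cup V(T_k)$; the $k=0$ clause makes this explicit, and the proof's matching $\{w_0u_0,\dots,w_ku_k,\,w_{k+1}x\}$ fails when $x=w_0$, since $w_0u_0$ and $w_{k+1}w_0$ share $w_0$. In your configuration the $3$-vertex adjacent to $x_6\in T_5$ is precisely $x_1=w_0$, the $3$-vertex of the minor triangle itself, so the lemma as stated and proved does not yield your contradiction. (The gap is repairable by tweaking the matching in that special case, but you do not do this --- and once you do, you have essentially rewritten the paper's direct reduction anyway.)
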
 
\begin{proof} 
	Let $f=[v_1v_2v_3v_4v_5v_6]$, $v_1$ be a 3-vertex and $T_i=[v_iv_{i+1}u_{i}]$ ($i=1,2,\ldots, 5$) be the five triangles (see Figure \ref{fig3}(a)). 
	Assume  to the contrary that there is no $5^+$-vertex on $T_i$. By Lemma \ref{neighbour-minor}, we may assume all $v_{i+1}$ and $u_i$ are 4-vertices for $i=1,2,\ldots, 5$. Let
	$X= \cup_{i=1}^5V(T_i)$ and $G'= G-X$. Then $(G', v_0)$ has a valid matching $M'$ and there is a good orientation $D'$ of $G'-M'$.

 Let $M = M'\cup\{v_1u_1, v_2u_2,\ldots, v_5u_5\}$. Then $M$ is a valid matching of $(G, v_0)$. Let   $D$ be the orientation of $G-M$   obtained from $D'$ by adding arcs $(v_1, v_6)$ and  $(v_{i+1}, u_{i })$, $(v_i, v_{i+1})$   for $i=1, \dots, 5$, and all the edges between $X$ and $V-X$ are oriented from $X$ to $V-X$ (see Figure \ref{fig3}(b)).   Clearly, $\Delta_{D}^+(v)<3$ and $D$ is AT by Lemma \ref{AT},  a contradiction. \qed
\end{proof}

\par\bigskip

The discharging rules are as follows:
\begin{itemize}
	\item[R1] Assume $f \ne f_0$ is a $3$-face. Then each face adjacent to $f$ transfers   $\frac 13$ charge to $f$.
	\item[R2] Assume $v \ne v_0$ is  $3$-vertex. If $v$ is contained in a triangle, then each of the other two faces incident to $v$ transfers $\frac 12$ charge to $v$; otherwise each face incident to $v$ transfers $\frac 13$ charge to $v$. 
	\item[R3] Assume $u \ne v_0$ is a $5^+$-vertex and $f \ne f_0$ is a $6$-face. If $f$  is adjacent to $s$ triangles that are incident to $u$, then $u$ transfers $\frac{s}{6}$ charge to $f$. 
	\item[R4] $f_0$ transfers $\frac{1}{3}$ charge to each adjacent triangle, and $\frac{1}{2}$  charge to each incident 3-vertex $v \ne v_0$.  $v_0$ transfers $\frac{1}{3}$ charge to each   6-face   $f \ne f_0$ which is either incident to $v_0$, or is not incident to $v_0$ but adjacent to a triangle $T$ which is incident to $v_0$.  
\end{itemize}
  
 
\begin{claim}\label{claim1}
	If a $6$-face $f$ has three minor $3$-vertices, then $ch( \to f) \ge \frac{1}{2}$.
\end{claim}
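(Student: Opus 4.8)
The first task is to locate the three minor $3$-vertices on $f$. Since each is a $3$-vertex different from $v_0$, Lemma~\ref{3adj3} forbids any two of them from being adjacent; as the boundary of $f$ is a $6$-cycle, they must occupy alternating positions, so after relabelling $f=[v_1v_2v_3v_4v_5v_6]$ I may assume $v_1,v_3,v_5$ are the minor $3$-vertices (so that $v_2,v_4,v_6$ are $4^+$-vertices, apart from possibly one being $v_0$). For each odd $i$ the vertex $v_i$ lies on a $3$-face; since $d(v_i)=3$ and $f$ is one of its three incident faces, this $3$-face $T_i$ is bounded by the third edge $v_ix_i$ at $v_i$ together with one of $v_{i-1}v_i$, $v_iv_{i+1}$, so $T_i$ is a $3$-face sharing exactly one edge of $f$. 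The absence of $4$- and $5$-cycles then forces $x_i\notin V(f)$ (any chord of the $6$-cycle $\partial f$ would create a forbidden $4$- or $5$-cycle) and forces $T_1,T_3,T_5$ to be three distinct $3$-faces, each adjacent to $f$.

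The key step is the local claim that for every odd $i$ the triangle $T_i$ either carries a $5^+$-vertex or contains $v_0$. If it did neither, $T_i$ would be a $(3,4,4)$-face avoiding $v_0$, i.e.\ a minor triangle; but the $f$-neighbour of $v_i$ lying on $T_i$ is adjacent along $\partial f$ to one of the other two minor $3$-vertices, a $3$-vertex which (again by the no-$5$-cycle hypothesis) is not on $T_i$ and is not $v_0$ --- this contradicts the $k=0$ case of Lemma~\ref{neighbour-minor}. Accordingly I put $z_i=v_0$ when $v_0\in V(T_i)$, and otherwise let $z_i$ be a $5^+$-vertex on $T_i$.

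It then remains to estimate $ch(\to f)$ from the multiset $\{z_1,z_3,z_5\}$, where (recalling $f\neq f_0$) a value $z_i=v_0$ contributes $\tfrac13$ to $f$ via rule R4, since $T_i$ is a triangle incident to $v_0$ and adjacent to $f$, while a value $z_i\neq v_0$ is a $5^+$-vertex incident to a triangle $T_i$ adjacent to $f$, hence contributes at least $\tfrac16$ to $f$ via rule R3 --- and at least $\tfrac26$ if it is incident to two triangles adjacent to $f$. The short-cycle hypothesis pins down the possible coincidences among the $T_i$: one checks that $T_1$ and $T_3$ can only share the vertex $v_2$, that $T_3$ and $T_5$ can only share $v_4$, and that $T_1$ and $T_5$ can only share $v_6$; in particular $z_1,z_3,z_5$ cannot all coincide, and if exactly two of them coincide then the common value lies in $\{v_2,v_4,v_6\}$ and is incident to two triangles adjacent to $f$. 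Splitting into the cases ``$z_1,z_3,z_5$ pairwise distinct'' and ``exactly two equal'' (and within each, tracking whether one of the $z_i$ equals $v_0$) yields $ch(\to f)\ge \tfrac16+\tfrac16+\tfrac16=\tfrac12$ or $ch(\to f)\ge \tfrac26+\tfrac16=\tfrac12$ in every instance.

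I expect the first two steps to be essentially routine once the no-$4$-cycle and no-$5$-cycle restrictions are applied systematically; the genuine work is the bookkeeping in the last step, where one must make sure each contributing vertex is counted once and must separately handle the subcases in which some $z_i=v_0$ (so rule R3 is unavailable for that vertex and one relies instead on the single $\tfrac13$ supplied by rule R4). Because the forbidden short cycles make the local picture so rigid, only a small number of coincidence patterns actually occur, and each is quickly dispatched.
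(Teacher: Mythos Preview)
Your argument is correct. The essential inputs are the same as the paper's---Lemma~\ref{3adj3} to place the three minor $3$-vertices at alternating positions, Lemma~\ref{neighbour-minor} to produce a $5^+$-vertex (or $v_0$) on each of the three triangles, and rules R3 and R4 to tally the incoming charge---but the case organisation differs.

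The paper first fixes the positions of the three triangles on the boundary edges of $f$: up to symmetry either two of them meet at a common vertex of $f$ (the ``$T_1,T_2,T_4$'' pattern) or none do (the ``$T_1,T_3,T_5$'' pattern), and then argues each pattern separately, with a further split according to whether $v_0$ lies on $f$ or on one of the triangles. You instead prove once that every triangle $T_i$ carries a witness $z_i\in\{v_0\}\cup\{5^+\text{-vertices}\}$, and then do the bookkeeping on the multiset $\{z_1,z_3,z_5\}$, using the short-cycle constraints to show that any coincidence $z_i=z_j$ forces the common value to be the unique boundary vertex $v_{(i+j)/2}$ shared by $T_i$ and $T_j$, so that R3 with $s\ge 2$ recovers the missing $\tfrac16$. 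This buys you a cleaner and more uniform treatment of the $v_0$ case: the paper's Case~1 asserts without explanation that ``at least one of the three triangles has a $5^+$-vertex $v\ne v_0$,'' a fact which is true but needs the observation (which you make explicit) that $v_0$ can lie on at most two of the three triangles. Conversely, the paper's edge-indexed split makes the geometry slightly more visible and avoids introducing the auxiliary objects $z_i$.
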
 
\begin{proof} 
	Assume $f=[v_1v_2v_3v_4v_5v_6]$.  By Lemma \ref{3adj3}, we may assume that $v_1$, $v_3$ and $v_5$ are the three minor 3-vertices. Then each of $v_1, v_3, v_5$ is incident to exactly one triangle. Hence at most two  of the three triangles   intersect each other.  Thus we may assume that the three triangles adjacent to $f$ are either $T_1, T_2, T_4$, or $T_1, T_3, T_5$, where $T_i = [v_iv_{i+1}u_i]$. 

\noindent
{\bf Case 1} The three triangles incident to $f$ are  $T_1, T_2, T_4$.

 If $v_0$ is a vertex of $f$ or $T_1,T_2$ or $T_4$, then $v_0$ transfers $\frac{1}{3}$ charge to $f$ by R4. By Lemma \ref{neighbour-minor},   at least  one of the three triangles  have a  $5^+$-vertex $v \ne v_0$ which sends at least $\frac{1}{6}$ charge to $f$. So $ch(\rightarrow f)\ge\frac{1}{3}+\frac{1}{6}=\frac{1}{2}$. 
 
 Assume $v_0$ is not a   vertex of $f, T_1,T_2$ or $T_4$. 
 By Lemma \ref{neighbour-minor}, either $v_2$ is a $5^+$-vertex or both of $u_{1 }$ and $u_{2 }$ are $5^+$-vertices. In both cases, $f$ receives   $\frac{1}{3}$ charge in total from $v_2$, $u_{1 }$ and $u_{2 }$. Moreover, by Lemma \ref{neighbour-minor}, either $v_4$ or $u_{4 }$ is a $5^+$-vertex, which transfers   $\frac{1}{6}$ charge to $f$. Hence,  $ch(\rightarrow f)\ge\frac{1}{3}+\frac{1}{6}=\frac{1}{2}$.

\noindent
{\bf Case 2} The three triangles incident to $f$ are $T_1,T_3,T_5$. 

By Lemma \ref{neighbour-minor},   each of the three triangles has either a $5^+$-vertex or $v_0$ which  transfers at least $\frac{1}{6}$ charge to $f$.   Thus, $ch(\rightarrow f)\ge \frac{1}{2}$. \qed
\end{proof}

\begin{claim}\label{claim2}
If a $6$-face $f$ has two $3$-vertices other than $v_0$ and is adjacent to four triangles, then $ch( \to f) \ge \frac{1}{3}$.
\end{claim}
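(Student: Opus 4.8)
The only discharging rules that put charge onto a $6$-face $f\ne f_0$ are R3 (from a $5^+$-vertex lying on a triangle adjacent to $f$) and R4 (from $v_0$), so the plan is to extract $\frac13$ from one of these. Since $G$ is $2$-connected (Lemma \ref{2-connected}) the boundary of $f$ is a $6$-cycle $[v_1v_2v_3v_4v_5v_6]$, and since $G$ has no $4$- or $5$-cycle this cycle has no chord. If $v_0$ lies on $f$ or on one of the four triangles adjacent to $f$, then R4 transfers $\frac13$ from $v_0$ to $f$ and we are done; so I assume henceforth that it does not, and in particular that every vertex of $f$ and every apex of these four triangles has degree at least $3$. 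Let $a,b$ be the two $3$-vertices of $f$; by Lemma \ref{3adj3} they are non-adjacent. A $3$-vertex of $f$ cannot have both of its incident $f$-edges on triangles (its third neighbour would then be forced, producing a $4$-cycle), nor — $f$ being chordless — can it be the apex of a triangle sharing an $f$-edge; since $f$ has exactly two non-triangle edges and $a,b$ are non-adjacent, it follows that each of $a,b$ lies on exactly one triangle adjacent to $f$, and the two non-triangle edges of $f$ are one edge at $a$ and one at $b$. Call these triangles $S_a\ni a$ and $S_b\ni b$; they are distinct, contain no $v_0$, and each carries a $3$-vertex, so by Lemma \ref{3adj3} each is a $(3,\ge4,\ge4)$-face, hence either a \emph{minor triangle} or a triangle carrying a $5^+$-vertex.

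If both $S_a$ and $S_b$ carry a $5^+$-vertex, then by R3 such a vertex sends at least $\frac16$ to $f$, and if one vertex serves for both triangles it sends $\frac{2}{6}=\frac13$ by itself; in either case $ch(\to f)\ge\frac13$. So I may assume, after rotating and reflecting $f$, that $S_a=[v_1v_2u_1]$ is a minor triangle, so $a=v_1$ and $d(v_2)=d(u_1)=4$. By the $k=0$ case of Lemma \ref{neighbour-minor}, no vertex of $S_a$ is adjacent to a $3$-vertex other than $v_0$ or $v_1$ itself; applied to the edges $v_2v_3$ and $v_6v_1$ this shows $v_3$ and $v_6$ are not $3$-vertices, so $b\in\{v_4,v_5\}$, i.e.\ $b$ is an endpoint of the $f$-edge opposite $v_1v_2$. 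In particular the edge $v_2v_3$ is incident to neither $a$ nor $b$, hence is a triangle edge, carrying a triangle $T_a=[v_2v_3u_2]$.

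The key point is that $T_a$ must carry a $5^+$-vertex. Indeed $T_a$ shares $v_2$ with the minor triangle $S_a$; if $T_a$ were a $(4,4,4)$-face it would be, by itself, a triangle chain of length $1$ intersected (at $v_2$) by $S_a$, yet $v_2\in V(T_a)$ is adjacent to the $3$-vertex $v_1\ne v_0$, contradicting Lemma \ref{neighbour-minor}; and if $T_a$ were minor then $S_a$ and $T_a$ would be two minor triangles sharing $v_2$, contradicting Lemma \ref{dist-minor}. So $T_a$ carries a $5^+$-vertex, contributing at least $\frac16$ by R3. For the side of $b$: if $S_b$ is \emph{not} minor it carries a $5^+$-vertex directly; if $S_b$ is minor, then running through the remaining positions ($b=v_4$ or $b=v_5$, with $S_b$ necessarily on the edge $v_4v_5$, since the other options put two minor triangles within distance $1$ against Lemma \ref{dist-minor}, or a minor triangle next to a $3$-vertex against Lemma \ref{neighbour-minor}) the same argument applied to $S_b$ produces a triangle $T_b$ adjacent to $f$ and carrying a $5^+$-vertex, sitting on the edge $v_5v_6$ or $v_3v_4$ respectively — in particular $T_b\ne T_a$. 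So in every case there is a $5^+$-vertex on $T_a$ and a $5^+$-vertex on a second triangle adjacent to $f$ ($S_b$ or $T_b$): if the two vertices are distinct they contribute $\frac16+\frac16=\frac13$, and if they coincide the single vertex lies on two triangles adjacent to $f$ and contributes $\frac{2}{6}=\frac13$. Either way $ch(\to f)\ge\frac13$. (This is the sharp bound: $ch(f)=2$, and here $f$ sends $\frac13$ to each of its four triangles and $\frac12$ to each of its two $3$-vertices, so $ch(\to f)\ge\frac13$ is exactly what forces $ch^*(f)\ge0$.)

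The step I expect to be the main obstacle is the case work hidden in the last paragraph: verifying, for each of the few positions of $b$ and $S_b$, that when $S_b$ is minor one genuinely gets a flanking triangle $T_b$ distinct from $T_a$, and that the apices $u_1,u_2,\dots$ cannot coincide with one another or with the $v_i$'s. Each such non-coincidence follows from the chordlessness of $f$ together with the absence of $4$- and $5$-cycles, but there are many small checks; the hypothesis "no $4$- and $5$-cycle" is used twice over, to make $f$ an induced $6$-cycle and to keep the triangle apices apart. The conceptual core, by contrast, is short: a minor triangle adjacent to $f$ forces a $5^+$-vertex onto the triangle flanking it along $f$ (via Lemmas \ref{neighbour-minor} and \ref{dist-minor}), and two such forced $5^+$-vertices already deliver the required $\frac13$.
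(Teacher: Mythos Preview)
Your argument has a genuine gap at the step you yourself flag as the ``key point,'' namely that $T_a=[v_2v_3u_2]$ must carry a $5^+$-vertex. To rule out $T_a$ being a $(4,4,4)$-face you invoke Lemma~\ref{neighbour-minor} with the chain $T_1=T_a$ and the offending $3$-vertex being $v_1$, adjacent to $v_2\in T_a$. But $v_1$ is precisely the $3$-vertex $w_0$ of the minor triangle $T_0=S_a$ itself, and the lemma does not forbid that adjacency: for any $k\ge1$ the shared vertex $w_1\in T_1$ is \emph{always} adjacent to $w_0$, so read your way the lemma would be vacuously false whenever a minor triangle meets a $(4,4,4)$-face. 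The $k=0$ clause already makes the exclusion explicit (``$v\in V(G)-(V(T_0)\cup\{v_0\})$''), and the proof for general $k$ only treats a $3$-vertex $x$ adjacent to the far end $w_{k+1}$ (or, by symmetry, $u_k$) with $x$ outside the configuration.

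Worse, the conclusion itself fails, not just the justification. In the paper's Case~5 (triangles $T_3,T_4,T_5,T_6$ and $3$-vertices $v_1,v_3$), take $S_a=T_6$ minor; then your flanking triangle $T_a$ is $T_5$, and nothing in the lemmas prevents $v_5,v_6,u_5$ from all having degree $4$ --- for instance, let $v_4=v_5=v_6=u_5=u_6=4$ while $u_3,u_4$ are the two $5^+$-apices. Here the required $\tfrac13$ comes from $u_3$ and $u_4$, neither of which lies on $T_a$. The genuinely external $3$-vertex is $b=v_3$, and to reach a chain vertex adjacent to $b$ you must push one step further to $T_4$ (where $v_4\sim v_3$); only then does Lemma~\ref{neighbour-minor} bite. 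The paper handles this by an explicit five-case split; your streamlined approach is salvageable, but you must argue with the $3$-vertex $b$ rather than $a$, and allow the chain from $S_a$ to have length up to $2$ before locating the forced $5^+$-vertex.
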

\begin{proof} Assume $f=[v_1v_2v_3v_4v_5v_6]$ and $T$ is a triangle adjacent to $f$.  If $v_0$ is a vertex of $f$ or $T$, then $v_0$ transfers $\frac{1}{3}$ charge to $f$ by R4. Assume $v_0$ is neither a vertex of $f$ nor a vertex of any triangle  $T$ adjacent to $f$. 
	
	By Lemma \ref{3adj3}, we may assume that either $v_1$ and $v_4$ or $v_1$ and $v_3$ are the two 3-vertices. 
	For $i=1,2,\ldots, 6$, if $v_iv_{i+1}$ is contained in a triangle, then let $T_i=[v_iv_{i+1}u_i]$ be the triangle. We need to consider five cases.

\noindent
{\bf Case 1}  The four triangles incident to $f$ are $T_1, T_2, T_3, T_5$ while the two 3-vertices are $v_1$ and $v_4$. 

If at least one of $v_2$ and $v_3$ is a $5^+$-vertex, then by R3, $ch( \to f) \ge \frac{1}{3}$. Assume both $d(v_2)$ and $d(v_3)$ are 4-vertices. By Lemma \ref{neighbour-minor} and Lemma \ref{dist-minor}, at least two of $u_1$, $u_2$ and $u_3$ are $5^+$-vertices each of which transfers $\frac{1}{6}$ charge to $f$. So $ch(\rightarrow f)\ge\frac{1}{3}$.

\noindent
{\bf Case 2} The four triangles incident to $f$ are $T_1, T_2, T_4, T_5$ while the two 3-vertices are $v_1$ and $v_4$.
 
  By Lemma \ref{neighbour-minor} and R3, at least one of $u_1$, $u_2$, $v_2$ and $v_3$ is a $5^+$-vertex transferring at least $\frac{1}{6}$ charge to $f$. By symmetry, at least one of $u_4$, $u_5$, $v_5$ and $v_6$ transfers at least $\frac16$ charge to $f$. Thus, we are done.

\noindent
{\bf Case 3} The four triangles incident to $f$ are $T_1, T_2, T_4, T_5$ while the two 3-vertices are $v_1$ and $v_3$.

If $v_2$ is a $5^+$-vertex, then $v_2$ transfers $\frac{1}{3}$ charge to $f$ by R3. Assume $v_2$ is a 4-vertex. By Lemma \ref{neighbour-minor}, both $u_1$ and $u_2$ are $5^+$-vertices each of which transfers $\frac{1}{6}$ charge to $f$.

\noindent
{\bf Case 4} The four triangles incident to $f$ are $T_1, T_3, T_4, T_5$ while the two 3-vertices are $v_1$ and $v_3$.

By Lemma \ref{neighbour-minor}, at least one of $v_2$ and $u_1$ is a $5^+$-vertex transferring $\frac{1}{6}$ charge to $f$. Moreover, using Lemma \ref{neighbour-minor} again, at least one of $v_4$, $v_5$, $v_6$, $u_3$, $u_4$ and $u_5$ is a $5^+$-vertex transferring at least $\frac{1}{6}$ to $f$. So $ch(\rightarrow f)\ge\frac{1}{3}$.

\noindent
{\bf Case 5} The four triangles incident to $f$ are $T_3, T_4, T_5, T_6$ while the two 3-vertices are $v_1$ and $v_3$.
 
If one of $v_4$, $v_5$ and $v_6$ is a $5^+$-vertex, then such a $5^+$-vertex sends $\frac{1}{3}$ charge to $f$ by R3. Assume all of $v_4$, $v_5$ and $v_6$ are 4-vertices. Then at least two of $u_3$, $u_4$, $u_5$ and $u_6$ are $5^+$-vertices each sending $\frac{1}{6}$ to $f$. Otherwise, it will contradict to Lemma \ref{neighbour-minor} or Lemma \ref{dist-minor}. Again $ch(\rightarrow f)\ge\frac{1}{3}$.\qed
\end{proof}

\par\bigskip\noindent
$\blacksquare$\quad{\bf Check charge on vertices $v\ne v_0$}

Let $v$ be a 3-vertex. By R2, $v$ gets 1 from incident $6^+$-faces. That is $ch^*(v)=ch(v)-ch(v\to)+ch(\to v)=-1-0+1=0$.

Let $v$ be a 4-vertex. $ch^*(v)=ch(v)=0$.

Let $v$ be a $5^+$-vertex. By R3, $v$ only transfers charge to $6$-faces that are 
adjacent to a triangle incident to $v$. Assume $v$ is incident with $t$ triangles, then $0<t\le\lfloor\frac{d(v)}{2}\rfloor$. Each triangle incident with $v$ is adjacent to at most three $6$-faces, and $v$ transfers $\frac{1}{6}$ to each of the three $6$-faces (note that if a $6$-face $f$ is adjacent to two triangles that are incident to $v$, then $v$ transfers $2 \times \frac 16$ charges to $f$). Hence $v$ sends out at most $\frac{1}{2}t$ charge. So we have $ch^*(v)=ch(v)-ch(v\rightarrow)\ge d(v)-4-\frac{1}{2}t\ge d(v)-4-\frac{1}{2}\times\lfloor\frac{d(v)}{2}\rfloor\ge0$.

\par\bigskip\noindent
$\blacksquare$\quad{\bf Check charge on faces $f\ne  f_0$}

Let $f$ be a 3-face. R1 guarantees $ch^*(f)\ge0$.

Let $f$ be a 6-face. 
 Assume that $f$ has $s$ 3-vertices other than $v_0$. Then $s\le 3$ by Lemma \ref{3adj3}, and $f$ is adjacent to at most $(6-s)$ triangles. 
 
 If $s=0$, then $f$ sends out at most $\frac{1}{3}$ to each adjacent triangle, and hence $ch(f \to) \le \frac 13 \times6=2$ and $ch^*(f) \ge 0$. 
 
 Assume $s=3$. If  $f$ is adjacent to at most two triangles, then $f$ has at most two minor $3$-vertices. So $ch(f\rightarrow)\le\frac{1}{2}\times2+\frac13+\frac{1}{3}\times2=2$  and $ch^*(f) \ge 0$. Assume  $f$ is adjacent to three triangles, then all these three 3-vertices are minor. By Claim \ref{claim1}, we have $ch^*(f)=d(f)-4-ch(f\rightarrow)+ch(\rightarrow f)\ge 2-(\frac{1}{2}\times3+\frac{1}{3}\times3)+\frac{1}{2}=0$.

 Assume  $s=2$. If $f$ is adjacent to at most three triangle, then $ch(f\to)\le \frac12\times2+\frac13\times3=2$ and $ch^*(f) \ge 0$. If $f$ is adjacent to four triangles, then by Claim \ref{claim2},   $ch^*(f)=d(f)-4-ch(f\rightarrow)+ch(\rightarrow f)\ge2-(\frac12\times2+\frac13\times4)+\frac{1}{3}=0$. 
 
Assume $s=1$. If $f$  is adjacent to at most four triangles, then  $ch(f\rightarrow)\le\frac{1}{2}+\frac{1}{3}\times4=\frac{11}{6}<2$.
Assume $f$ is adjacent to five triangles. Then $ch(f\rightarrow)=\frac{1}{2}+\frac{1}{3}\times5=\frac{13}{6}$. On the other hand, either at least one vertex of the five triangles is a $5^+$-vertex  transferring $\frac{1}{6}$ charge to $f$  by Lemma \ref{sun}, or $v_0$ is a vertex of the five triangles transferring $\frac13$ to $f$ by R4. Hence $ch^*(f)\ge 2-\frac{13}{6}+\frac{1}{6}=0$.

Let $f$ be a $7^+$-face. Assume $f$ has $s$ 3-vertices other than $v_0$, then $  s\le\lfloor\frac{d(f)}{2}\rfloor$ and $f$ is adjacent to at most $(d(f)-s)$ triangles. Hence $ch^*(f)=d(f)-4-[\frac{1}{2}\times s+\frac{1}{3}\times(d(f)-s)]=\frac{2}{3}d(f)-\frac{1}{6}s-4 \ge (\frac{2}{3} - \frac 1{12})d(f) -4 >0$.

\par\bigskip\noindent
$\blacksquare$\quad{\bf Check charge on $f_0$ and $v_0$}

By R4, it is clear that $v_0$  transfers at most $(d(v_0)-1)\times\frac{1}{3}$ charge to others. That is, $ch^*(v_0)\ge d(v_0)-4-(d(v_0)-1)\times\frac{1}{3}=\frac{2}{3}d(v_0)-\frac{11}{3}\ge-\frac{7}{3}$ (as $d(v_0)\ge2$).

  Since $f_0$ is incident with at most $\lfloor\frac{d(f_0)}{2}\rfloor$ 3-vertices each getting $\frac{1}{2}$ charge from it, and $f_0$ is adjacent to at most $d(f_0)$ triangles each getting $\frac{1}{3}$ charge from it. We have $ch^*(f_0)\ge d(f_0)-4-\frac{1}{2}\lfloor\frac{d(f_0)}{2}\rfloor-\frac{1}{3}d(f_0)\ge\frac{5}{12}d(f_0)-4\ge-\frac{11}{4}$.

Consequently, we obtain the following contradiction, and the proof is complete.
$$0\le\sum_{x\in V\cup F\setminus\{v_0, f_0\}}ch^*(x)=-8-ch^*(v_0)-ch^*(f_0)\le-\frac{35}{12}.$$

\section{Planar graphs without $4$- and $6$-cycles}

This section shows plane graph without $4$- and $6$-cycles.  We list our discharging rules as follows:

\begin{itemize}
	\item[R1] Assume $f \ne f_0$ is a $3$-face. Then each face adjacent to $f$ transfers   $\frac 13$ charge to $f$.
	\item[R2] Assume $v \ne v_0$ is  $3$-vertex. If $v$ is contained in a triangle, then each of the other two faces incident to $v$ transfers $\frac 12$ charge to $v$; otherwise each face incident to $v$ transfers $\frac 13$ charge to $v$. 
	\item[R3] $f_0$ transfers $\frac{1}{3}$ charge to each adjacent triangle, and $\frac{1}{2}$  charge to each incident 3-vertex $v \ne v_0$.  
\end{itemize}

\par\noindent
$\blacksquare$\quad{\bf Check charge on vertices $v\ne v_0$}

For $d(v)=3$, R2 ensures that the final charge of $v$ is non-negative.

For $d(v)\ge4$, no transference on $v$, we have $ch^*(v)=ch(v)\ge0$.

\par\bigskip\noindent
$\blacksquare$\quad{\bf Check charge on faces $f\ne f_0$}

Let $f$ be a 3-face. R1 guarantees $ch(\to f)=1$. So $ch^*(f)=-1+1=0$.

Let $f$ be a 5-face. Since $G$ does not contain 6-cycle, $f$ is not adjacent to any triangle. Thus $f$ only discharges to the non-minor 3-vertices each of which gets $\frac13$ charge from $f$. On the other hand, $f$ is incident with at most two such 3-vertices by Lemma \ref{3adj3}. It concludes that $ch^*(f)=1-\frac13\times2>0$.

Let $f$ be a $7^+$-face. Assume $f$ is incident with $s$ 3-vertices besides $v_0$. Then by Lemma \ref{3adj3}, $s\le\lfloor\frac{d(f)}{2}\rfloor$. By R1 and R2, $f$ transfers at most $\frac{1}{2}s$ to 3-vertices and $(d(f)-s)\times\frac13$ to triangles. Hence, we have $ch^*(f)\ge d(f)-4-\frac{1}{2}s-\frac{1}{3}(d(f)-s)=\frac{2}{3}d(f)-\frac{1}{6}s-4\ge\frac{7}{12}d(f)-4>0$.

\par\bigskip\noindent
$\blacksquare$\quad{\bf Check charge on $f_0$ and $v_0$}

It is obvious that $ch^*(v_0)=ch(v_0)=d(v_0)-4\ge-2$. 

Since $f_0$ is incident with at most $\lfloor\frac{d(f_0)}{2}\rfloor$ 3-vertices each getting $\frac{1}{2}$ charge from it, and $f_0$ is adjacent to at most $d(f_0)$ triangles each getting $\frac{1}{3}$ charge from it. We have $ch^*(f_0)\ge d(f_0)-4-\frac{1}{2}\lfloor\frac{d(f_0)}{2}\rfloor-\frac{1}{3}d(f_0)\ge\frac{5}{12}d(f_0)-4\ge-\frac{11}{4}$.

Consequently, we obtain the following contradiction, and the proof is complete.
$$0\le\sum_{x\in V\cup F\setminus\{v_0, f_0\}}ch^*(x)=-8-ch^*(f_0)-ch^*(v_0)\le-\frac{13}{4}.$$

\section{Planar graphs without $4$- and $7$-cycles}
In this section, we consider plane graphs without $4$- and $7$-cycles. First we derive  more properties of  a minimal counterexample $G$ to Theorem \ref{main-thm} for $G \in \mathcal{P}_{4,7}$.

\begin{figure}[htb]
	\centering
	\includegraphics[width=5in]{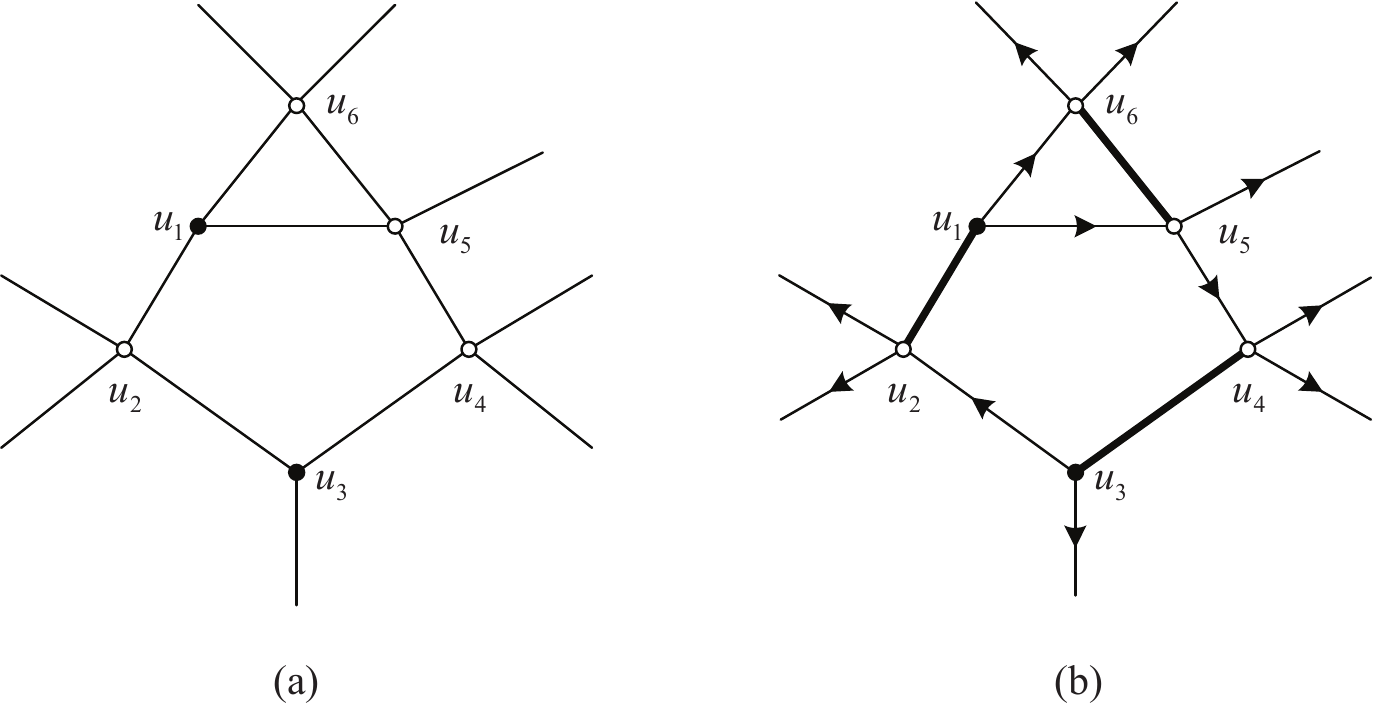}
	\caption{(a) A special $5$-cycle and an adjacent triangle. (b) For the proof of Lemma \ref{STF}, where a thick line is an edge in the matching $M$.}
	\label{fig4}
\end{figure}

\begin{definition}\label{STF}
	 A $5$-cycle   $f=[u_1u_2u_3u_4u_5]$  is called   {\em special  } if   it is adjacent to a triangle $T=[u_1u_5u_6]$ with $u_i\ne v_0$ $(i=1, 2, \dots, 6)$, and all the vertices are $4$-vertices except that $u_1$ and $u_3$ are $3$-vertices, as depicted in Figure \ref{fig4}(a).  
\end{definition}

\begin{lemma}\label{STF}
	$G$ has no special $5$-cycle.
\end{lemma}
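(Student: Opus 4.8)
The plan is to follow the same reducibility template used in Lemmas \ref{3adj3}, \ref{neighbour-minor}, and \ref{dist-minor}: assume $G$ contains a special $5$-cycle $f=[u_1u_2u_3u_4u_5]$ with adjacent triangle $T=[u_1u_5u_6]$ as in Figure \ref{fig4}(a), delete an appropriate vertex set $X$, apply minimality of $G$ to obtain a valid matching $M'$ of $(G-X,v_0)$ together with a good orientation $D'$ of $(G-X)-M'$, then extend both $M'$ and $D'$ across $X$ and invoke Lemma \ref{AT} to conclude that the extended orientation is AT, contradicting the choice of $G$. The natural choice is $X=\{u_1,u_2,u_3,u_4,u_5,u_6\}$, the six vertices of the special configuration; none of them is $v_0$ by Definition \ref{STF}, so the resulting matching will automatically be valid.

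First I would check that $X$ can be removed and re-oriented with out-degree at most $2$ inside $X$. The key point is the degree bookkeeping: $u_1$ and $u_3$ are $3$-vertices, so each has exactly one neighbour outside $X$ (since $u_1$ has neighbours $u_2,u_5,u_6$ all in $X$, in fact $u_1$ has \emph{no} neighbour outside $X$, and $u_3$ has neighbours $u_2,u_4$ in $X$ plus one outside); the vertices $u_2,u_4,u_5,u_6$ are $4$-vertices, so each has at most two neighbours outside $X$. I would put one edge of the configuration into the matching to absorb the ``extra'' out-degree — a good candidate is to match $u_3$ to its unique outside neighbour $x$ (if $u_3$ has an outside neighbour) or to use an internal edge such as $u_1u_6$ or $u_3u_2$ — and then orient all remaining edges between $X$ and $V-X$ from $X$ to $V-X$, and orient the internal edges of $X$ acyclically so that each of $u_2,u_4,u_5,u_6$ receives at most $2$ outgoing arcs total and $u_1,u_3$ receive at most $2$. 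Since $G$ has no $4$-cycles, the subgraph $G[X]$ on these six vertices is sparse enough (it is essentially a $5$-cycle with one chord-triangle attached, $7$ edges), so after removing one matching edge the remaining at most $6$ edges inside $X$ admit an acyclic orientation respecting the out-degree caps; one computes the out-degrees explicitly along the chain $u_1\to u_2\to u_3\to u_4\to u_5$ and $u_6\to u_1$, $u_6\to u_5$, mirroring Figure \ref{fig4}(b). Then $D[X]$ is acyclic hence AT, all cross arcs go from $X$ to $V-X$, and Lemma \ref{AT} gives that $D$ is AT; checking $\Delta_D^+(v)<3$ and $d_D^+(v_0)=0$ is immediate since $v_0\notin X$ and $D'$ was already good.

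The main obstacle I anticipate is choosing the matching edge and the acyclic orientation of $G[X]$ so that the out-degree bound $\Delta_D^+<3$ is actually met at every vertex of $X$ simultaneously, given that $D'$ may already assign some incoming arcs to the boundary vertices $u_2,u_4,u_5,u_6$ from $V-X$ — wait, incoming arcs do not count toward out-degree, so that is fine; the real constraint is that any edge of $G[X]$ not in the matching contributes to the out-degree of exactly one of its endpoints, and each $4$-vertex $u_i$ already spends out-degree on its (up to two) edges to $V-X$ which are all oriented out of $X$. So a $4$-vertex with two outside neighbours can afford \emph{zero} internal out-arcs, while one with one outside neighbour can afford one, etc. This is exactly the kind of local counting that forces the specific shape of the configuration in Definition \ref{STF} (why $u_1$ and $u_3$ must be the $3$-vertices) and why the matching edge must be picked incident to a vertex that would otherwise be overloaded. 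I would handle this by a short case analysis on how many neighbours outside $X$ each of $u_2,u_4,u_5,u_6$ has, using the $4$-cycle-freeness to bound the number of chords and the number of common outside neighbours, and exhibit the orientation of Figure \ref{fig4}(b) as the generic case; the degenerate cases (fewer outside neighbours) only make the out-degree constraints easier to satisfy.
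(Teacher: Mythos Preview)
Your template is exactly the paper's, and the choice $X=\{u_1,\dots,u_6\}$ is correct. The gap is in the matching: a \emph{single} matching edge cannot work here, and the orientation you sketch fails at $u_2$.

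Count arcs. Since $G$ has no $4$-cycles, $G[X]$ has exactly the seven edges of the $5$-cycle plus the two new triangle edges (no chords), and the degree sum $3+4+3+4+4+4=22$ forces exactly $22-2\cdot 7=8$ edges between $X$ and $V-X$. All eight of those are oriented out of $X$, so the six vertices of $X$ must together supply out-degree at least $8+(7-m)$, where $m$ is the number of matching edges you delete inside or across the boundary of $X$. With the cap $\Delta_D^+<3$ the total available out-degree is $6\cdot 2=12$, hence you need $m\ge 3$. One matching edge leaves $14$ arcs to place, which is impossible. Concretely, in your orientation $u_1\to u_2\to u_3\to\cdots$ the vertex $u_2$ already has two neighbours outside $X$ (it is a $4$-vertex with only $u_1,u_3$ inside), so the arc $u_2\to u_3$ gives $d_D^+(u_2)=3$.

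The paper uses a perfect matching of $X$, namely $M=M'\cup\{u_1u_2,\ u_3u_4,\ u_5u_6\}$, and then orients the four remaining internal edges as $(u_1,u_6)$, $(u_1,u_5)$, $(u_5,u_4)$, $(u_3,u_2)$. A direct check gives $d_D^+(u_i)=2$ for every $i$ (each of $u_2,u_4,u_6$ has two outside neighbours and zero internal out-arcs; each of $u_3,u_5$ has one outside and one internal; $u_1$ has zero outside and two internal). This orientation of $G[X]$ is acyclic, so Lemma~\ref{AT} finishes the argument exactly as you outlined. Once you replace ``one matching edge'' by this perfect matching, your proof is complete and coincides with the paper's.
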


\begin{proof} Assume  $f=[u_1u_2u_3u_4u_5]$ is a special $5$-cycle and $T=[u_1u_5u_6]$ is a  triangle adjacent to $f$, where $d(u_1)=d(u_3)=3$ and $d(u_i)=4$ for $i=2, 4, 5, 6$. Let $X=\{u_1, u_2, \ldots, u_6\}$ and $G'=G-X$. Then, by the minimality, $(G', v_0)$ has a valid matching $M'$ and there is a good orientation $D'$ of $G'-M'$. 

Let $M=M'\cup\{u_1u_2, u_3u_4, u_5u_6\}$, then $M$ is a valid matching of $(G, v_0)$. Let $D$ be an orientation of $G$ obtained from $D'$ by adding arcs  $(u_1, u_6)$, $(u_1, u_5)$, $(u_5, u_4)$ and $(u_3, u_2)$, and all the edges between $X$ and $V-X$ are oriented from $X$ to $V-X$, as depicted in Figure \ref{fig4}(b). It is obvious that $D[X]$ is AT. Then, by Lemma \ref{AT}, $D$ is AT. As $\Delta^+_D(v)<3$ and $d^+_D(v_0)=0$, $D$ is a good orientation of $G-M$, a contradiction.\qed
\end{proof}
\par\bigskip
The discharging rules are defined as follows:

\begin{itemize}
	\item[R1] Assume $f \ne f_0$ is a $3$-face. Then each face adjacent to $f$ transfers   $\frac 13$ charge to $f$.
	\item[R2] Assume $v \ne v_0$ is  $3$-vertex. If $v$ is contained in a triangle, then each of the other two faces incident to $v$ transfers $\frac 12$ charge to $v$; otherwise each face incident to $v$ transfers $\frac 13$ charge to $v$. 
	\item[R3] Assume $u \ne v_0$ is a $5^+$-vertex and $f \ne f_0$ is a $5$-face. Then $u$  transfers $\frac{1}{6}$ charge to $f$ either $f$ is  incident to $u$, or $f$ is not incident to $u$ but adjacent to a triangle which is incident to $u$.
	\item[R4] $f_0$ transfers $\frac{1}{3}$ charge to each adjacent triangle, and $\frac{1}{2}$  charge to each incident 3-vertex $v \ne v_0$.  $v_0$ transfers $\frac{1}{3}$ charge to each   5-face   $f \ne f_0$ which is either incident to $v_0$, or is not incident to $v_0$ but adjacent to a triangle $T$ which is incident to $v_0$.  
\end{itemize}

\noindent
$\blacksquare$\quad{\bf Check charge on vertices $v\ne v_0$}

Let $v$ be a 3-vertex. By R2, $ch^*(v)\ge0$.

Let $v$ be a 4-vertex. We have $ch^*(v)=ch(v)=0$.

Let $v$ be a $5^+$-vertex. By R3, $v$ transfers at most $\frac16\times d(v)$ charge to $5$-faces. It follows that $ch^*(v)\ge d(v)-4-\frac{1}{6}d(v)=\frac{5}{6}d(v)-4>0$.

\par\bigskip\noindent
$\blacksquare$\quad{\bf Check charge on faces $f\ne f_0$}

Let $f$ be a 3-face. R1 guarantees $ch^*(f)\ge0$.

Let $f$ be a 5-face. By Lemma \ref{3adj3}, $f$ has at most two 3-vertices other than $v_0$. Since $G$ has no $7$-cycle, $f$ is adjacent to at most one triangle. Namely, $f$ has at most one minor 3-vertex. If $f$ has  at most one 3-vertex other than $v_0$, then $ch(f\rightarrow)\le\frac{1}{2}+\frac{1}{3}<1$. Assume $f$ has two 3-vertices other than $v_0$. 

Firstly, if $f$ does not have any minor 3-vertex, then $f$ transfers at most $\frac{1}{3}$ charge to the unique triangle and $\frac{1}{3}\times2$ to the non-minor 3-vertices. That is, $ch^*(f)=ch(f)-ch(f\to)\ge1-1=0$. 

Assume $f$ has a minor 3-vertex. Assume  $f=[v_1v_2v_3v_4v_5]$ with $d(v_1)=3$ and  $T=[v_1v_2v_6]$. In this case, $ch(f\rightarrow)=\frac{1}{2}+\frac{1}{3}+\frac{1}{3}=\frac76$. If one of $v_i$ ($i=2, 3, \dots, 6$) is $v_0$ or a $5^+$-vertex, then such $v_i$ transfers at least $\frac16$ charge to $f$ by R4 and R3. Thus, $ch^*(f)=ch(f)-ch(f\to)+ch(\to f)\ge1-\frac76+\frac16\ge0$. Assume  $f$ and $T$ does not contain $v_0$ and $5^+$-vertex. By Lemma \ref{3adj3} and Lemma \ref{neighbour-minor}, another 3-vertex must be $v_4$. Thus, there is a special $5$-cycle in $G$, contradicting to Lemma \ref{STF}.

Let $f$ be a 6-face. By Lemma \ref{3adj3}, $f$ has at most three 3-vertice other than $v_0$. Since $G$ has no $4$- and $7$-cycles, $f$ is adjacent to at most one triangle $T$ which shares two common edges with $f$. If $f$ is not adjacent to any triangle, $f$ only sends charge to non-minor 3-vertices each getting $\frac13$ from $f$. Hence, $ch^*(f)\ge d(f)-4-\frac{1}{3}\times3>0$. If $f$ is adjacent to one triangle $T$ which shares two common edges with $f$, then $f$ has at most one minor 3-vertex. Thus, $ch^*(f)\ge d(f)-4-\frac12-\frac{1}{3}\times2-\frac13>0$.

Let $f$ be a $8^+$-face. If $f$ is incident with $s$ 3-vertices other than $v_0$ where $0\le s\le\lfloor\frac{d(f)}{2}\rfloor$. Then $f$ transfers at most $\frac12\times s$ charge to 3-vertices and $(d(f)-s)\times\frac13$ charge to triangles. Hence, we have $ch^*(f)\ge d(f)-4-\frac{1}{2}s-\frac{1}{3}(d(f)-s)=\frac{2}{3}d(f)-\frac{1}{6}s-4\ge\frac{7}{12}d(f)-4>0$.

\par\bigskip\noindent
$\blacksquare$\quad{\bf Check charge on $f_0$ and $v_0$}

For this checking procedure is the same as the last part in Section 3, we omit the details. That is $ch^*(v_0)\ge-\frac{7}{3}$ and $ch^*(f_0)\ge-\frac{11}{4}$.

Thus, we will have $0\le\sum_{x\in V\cup F\setminus\{v_0, f_0\}}ch^*(x)=-8-ch^*(v_0)-ch^*(f_0)\le-\frac{35}{12}$, a contradiction.

\end{document}